\theoremstyle{plain}
\def\Z{\mathbb Z}
\def\C{\mathbb C}
\def\N{\mathbb N}
\def\bN{\mathbf N}
\def\T{\mathbb T}
\def\cR{{\mathcal R}}
\def\cP{{\mathcal P}}
\def\1{{\bf 1}}
\def\supp{{\mathrm {supp}\;}}
\def \meas {\mathrm{meas}}
\def\cA{{\mathcal A}}\def\cB{{\mathcal B}}
\def\cP{{\mathcal P}}
\def\cR{{\mathcal R}}
\def\bC{{\mathbb C}}
\def\bN{{\mathbb N}}
\def\bQ{{\mathbb Q}}\def\bR{{\mathbb R}}\def\bT{{\mathbb T}}
\def\bZ{{\mathbb Z}}
\def \bc {\mathbf c}
\def \bf {\mathbf f}
\def \bx {\mathbf x}
\def\fn{{\mathfrak n}}
\def\alp{{\alpha}}
\def\del{{\delta}}
\def\tet{{\theta}}
\def\sig{{\sigma}}
\def\eps{\varepsilon}
\def\le{\leqslant} \def\ge{\geqslant}
\def\d{{\,{\rm d}}}
\def \sig{{\sigma}}
\theoremstyle{plain}
\newtheorem{theorem}{Theorem}[section]
\newtheorem{proposition}[theorem]{Proposition}
\newtheorem{lemma}[theorem]{Lemma}
\theoremstyle{remark}
\newtheorem{remark}{Remark}[section]
\numberwithin{equation}{section}
\begin{document}
\title[Roth-type Theorem for high-power system in Piatetski-Shapiro primes]{R\MakeTextLowercase{oth-type} T\MakeTextLowercase{heorem for high-power system in} P\MakeTextLowercase{iatetski-}S\MakeTextLowercase{hapiro primes} (\uppercase\expandafter{\romannumeral2})}
\author{X\MakeTextLowercase{iumin} R\MakeTextLowercase{en}, Y\MakeTextLowercase{u-chen} S\MakeTextLowercase{un}, Q\MakeTextLowercase{ingqing} Z\MakeTextLowercase{hang}, R\MakeTextLowercase{ui} Z\MakeTextLowercase{hang}}
\address{School of Mathematics, Shandong University, Jinan, Shandong 250100, People's Republic of China}
\email{xmren@sdu.edu.cn}
\address {Department of Mathematics and Statistics, University of Turku, 20014 Turku, Finland}
\email{yuchensun93@163.com}
\address{School of Mathematics, Shandong University, Jinan, Shandong 250100, People's Republic of China}
\email{qingqingzhangsdu@126.com}
\address{School of Mathematics, Shandong University, Jinan, Shandong 250100, People's Republic of China}
\email{rzhang@mail.sdu.edu.cn}
\keywords{Piatetski-Shapiro prime, Roth theorem, transference principle}
\subjclass[2020]{11B30; 11P32; 11L20}
\begin{abstract}
   We consider the nonlinear system $c_1p_1^d +c_2p_2^d + \dots + c_s p_s^d = 0$ with  $c_1, c_2,\dots, c_s\in\mathbb Z$ being nonzero and satisfying $c_1 +c_2 + \dots + c_s = 0$. We show that  for $s\ge 2\lfloor \frac{d^2}2\rfloor+1$  and $c\in\left(1, 1+c(d,s)\right)$,   if the system has only $K$-trivial solutions in subset $\cA$ of Piatetski-Shapiro primes up to $x$ and corresponding to $c$,  then $|\cA| \ll \frac{x^{\frac1c}}{\log x} $$\left(\log \log \log \log x\right)^{\frac{2-s}{dc}+\eps}$. 
\end{abstract}
\maketitle

\section{Introduction}

Let $N$ be a large positive integer. Erd\H{o}s-Tur\'{a}n conjectured \cite{Erdo-1936}  that if $A\subset [N]=\{1, 2,\dots ,N\}$ contains no $k$-term arithmetic progressions, then 
$|A|=o(N) $ as $N\rightarrow\infty$. 
Roth \cite{Roth-1953} proved that this conjecture is true when $k=3$ and showed that if $A\subset [N ]$ contains no nontrivial solutions to the
 diophantine equation $x -2y+ z = 0,$ then $|A| \ll N/\log \log N.$ This work was improved by many authors and was generalized to translation-invariant linear equations (see \cite{Heath-Brown-1987, Szemeredi-1990, Bloom-Sisask-2020}, etc). 
 In $2005$, Green \cite{Green-2005} established a remarkable analogue of Roth's theorem in primes which stated that any set containing a positive proportion of the primes contains nontrivial $3$-term arithmetic progressions. 
 In recent years, extensions of Roth's theorem to nonlinear systems have been investigated by many authors. Concerning the homogeneous nonlinear system
\begin{equation}\label{bustard}
c_1x_1^d +c_2x_2^d + \dots + c_s x_s^d = 0
\end{equation}
subject to the condition
\begin{equation}\label{Maincondi}
    c_1 +c_2+ \dots + c_s = 0,
\end{equation}
Browning and Prendiville considered the quadratic case $d=2$ and showed in  \cite{Browning-Prendiville-2017} that  if $s\ge 5$ and \eqref{bustard} contains only $K$-trivial solutions in $A\subset [N]$, then for arbitrary $\varepsilon >0,$
\begin{equation*}\label{full}
|A| \ll N\left(\log \log \log N\right)^{\frac{2-s}{2}+\varepsilon}.
\end{equation*}
Here a solution ${\bx}=(x_{1}, x_{2},\dots, x_{s})$ to \eqref{bustard} is said to be $K$-trivial if $(x_{1}^{d}, x_{2}^{d}, \dots, x_{s}^{d})\in K$, 
 where $K$ is a union of $k$ proper subspaces of the rational hyperplane 
 \begin{equation}\label{MainEqn}
 c_1x_1 +c_2x_2 + \dots + c_s x_s = 0
 \end{equation}
 with each of them containing the diagonal elements $\{(x, x, \ldots,x): x \in \bQ\}.$
Chow \cite{Chow-2017} studied the equation \eqref{bustard} of degree $d\ge 2$ in prime
numbers. Let $ \mathcal {P}_N$  denote the set of all prime numbers up to $N$.
Chow's result states that for $s\ge s(d)$, if the equation \eqref{bustard} has only $K$-trivial solutions in  $\mathcal A\subset  \mathcal P_N$, then
\begin{equation*}\label{prime}
|\mathcal A| \ll \frac {N}{\log N}\left(\log \log \log \log N\right)^{\frac{2-s}{d}+\varepsilon},
\end{equation*}
where 
$$
s(2)=5,\quad s(3)=9,\quad s(4)=15,\quad s(d)=d^2+1~\quad (d\ge 5).
$$

In our previous work \cite{Ren-Zhang-Zhang-2023, Zhang-Zhang-2023}, we have considered the equation \eqref{bustard}  in Piatetski-Shapiro primes for $d\ge 2$. Let $c\in (1, 2)$  and $\bN^c=\{\lfloor n^c \rfloor: n\in\N\},$ where $\lfloor x\rfloor$ is the largest integer not exceeding $x$. The primes in $\N^c$ (denoted by $\mathcal {P}^{c}$) are called Piatetski-Shapiro primes corresponding to $c$.
Let $x$ be a positive integer and write $\mathcal {P}^{c}_x=\cP^c\cap [x].$ 
In 1953, Pyatecki\u{\i}-\v{S}apiro \cite{Piatetski-Shapiro-1953} firstly showed the prime number theorem:  for $1<c<{12}/{11}$,
\begin{equation*}\label{pnt}
|\cP^c\cap [x]|=(1+o(1)) \frac{x^{\frac{1}{c}}}{\log x} \quad \text { as } x \rightarrow \infty.
\end{equation*}
The range of $c$ has been improved many times (see \cite{Heath-Brown-1983, Kolesnik-1985, Liu-Rivat-1992}, etc). The current best known result $1<c<\frac{2817}{2426}$ is due to Rivat and Sargos \cite{Rivat-Sargos-2001}. 

It was proved in   \cite{Ren-Zhang-Zhang-2023} and \cite{Zhang-Zhang-2023} that for $s\ge \bar{s}(d)$ and $c\in\left(1, 1+{c}(d,s)\right)$, if the equation \eqref{bustard} has only $K$-trivial solutions in  $\mathcal A\subset \mathcal P_x^c$, then 
\begin{equation*}
 |\mathcal A| \ll_{\eps} \frac{x^{\frac1c}}{\log x}(\log \log \log \log x)^{\frac{2-s}{dc}+\eps},
\end{equation*}
where 
\begin{equation}\label{1.4}
\bar{s}(d)=2^{d}+3~\ (2\le d\le 4),\quad \bar{s}(d)=d(d+1)+3\quad (d\ge 5),
\end{equation}
and
$${c}(d,s)=\min\left\{\frac{2d}{(\bar{s}(d)-2)\left((3d-4)(9d^2-4)-d\right)-d},
\frac{d}{(\bar{s}(d)-3)s-d}\right\}.
$$ 
The number $\bar{s}(d)$ is determined by the restricted estimate in \cite[Proposition 5.1]{Ren-Zhang-Zhang-2023} and \cite[Proposition 5.1]{Zhang-Zhang-2023}, where we have used Bourgain's strategy in \cite[Section 4]{Bourgain-1989}. 

In this paper, we will borrow the idea in \cite{Sun-Du-Pan-2023} to treat the restricted estimate (see  Lemma \ref{psiupsilon} below). This allows us to  reduce the number of variables in \eqref{1.4} to $\bar{s}(d)=2\lfloor \frac{d^2}2\rfloor+1$. Moreover, for $d\ge 4$  we will improve the admissible range of $c$ in \cite[Lemma 5]{Akbal-Guloglu-2016} by using new exponential sum estimates.  

\begin{theorem}\label{MainThm}
    Let $d\ge2$ and $\bar{s}(d)=2\lfloor \frac{d^2}{2}\rfloor+1$. Suppose that the $d$-th power system
    \begin{equation}\label{equation}
    c_1x_1^d +c_2x_2^d + \dots + c_s x_s^d = 0 \quad {\rm with}\quad  c_1+c_2+\dots+c_s=0
    \end{equation}
    has only $K$-trivial solutions
    in $\mathcal A\subset   \mathcal {P}^{c}_x$. Then for $s\ge \bar{s}(d),$ $c\in\left(1, 1+c(d,s)\right)$ and arbitrary $\varepsilon >0,$ we have
    \begin{align*} \label{DensityBound}
    |\mathcal A| \ll_{\eps} \frac{x^{\frac1c}}{\log x}(\log \log \log \log x)^{\frac{2-s}{dc}+\eps},
    \end{align*}
    where  
    $$
    c(2,s)=    \min\left\{\frac{1}{54}, \frac{1}{2s-1}\right\},\quad 
    c(3,s)=\min\left\{\frac{1}{495}, \frac{3}{8s-3}\right\},
    $$
    and
    \begin{equation}\label{c_0}
    c(d,s)=\begin{cases}
      \min\left\{\frac{2}{(4\bar{s}(d)-3)\left(d(d+1)^{2}-1\right)-1},
    \frac{d}{(\bar{s}(d)-1)s-d}\right\},&  {\rm if}\ 4\le d\le11,\\
    \min\left\{\frac{4}{(4\bar{s}(d)-3)(3(3d-2)(3d+2)-2)-2}, 
    \frac{d}{(\bar{s}(d)-1)s-d}\right\},&{\rm if}\ 2 \mid d,\ d\ge 12, \\
    \min\left\{\frac{4}{(4\bar{s}(d)-3)(3(3d-1)(3d+1)-2)-2},
    \frac{d}{(\bar{s}(d)-1)s-d}\right\}, &{\rm if}\ 2\nmid d,\ d\ge 12.
    \end{cases}
    \end{equation}
    \end{theorem}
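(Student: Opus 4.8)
The plan is to follow the now-standard \emph{transference principle} architecture, adapting it to the Piatetski-Shapiro setting and feeding in the sharper inputs advertised in the introduction. First I would set up a multiplicative weight: one constructs a nonnegative function $\nu$ supported on (a suitable linear rescaling of) the $d$-th powers of Piatetski-Shapiro primes in $[x]$, with mean comparable to the density predicted by the prime number theorem for $\cP^c$, and majorizing the indicator of $\mathcal A^d$. The characteristic feature of the Piatetski-Shapiro case is that membership in $\bN^c$ is detected by the derivative $\lfloor -n^{1/c}\rfloor - \lfloor -(n+1)^{1/c}\rfloor$, equivalently via $\psi$-functions $\psi(t)=\{t\}-\tfrac12$; this is exactly where Lemma~\ref{psiupsilon} and the improved version of \cite[Lemma 5]{Akbal-Guloglu-2016} enter, so the first block of the argument is to record the pseudorandomness (linear forms / restriction) estimates for $\nu$ in the available range of $c$. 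The constraint $c\in(1,1+c(d,s))$ is precisely what makes the error terms from the $\psi$-expansion negligible against the main term.

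Second, I would prove the \textbf{restriction estimate}: for the exponential sum $f_{\nu}(\alpha)=\sum_{n} \nu(n) e(\alpha n)$ attached to the $d$-th power Piatetski-Shapiro primes, one needs a bound of the shape $\int_0^1 |f_\nu(\alpha)|^t\,d\alpha \ll x^{t/(dc)-1+\eps}$ for some $t<\bar s(d)$. This is where the reduction $\bar s(d)=2\lfloor d^2/2\rfloor+1$ comes from: instead of Bourgain's iteration used in \cite{Ren-Zhang-Zhang-2023, Zhang-Zhang-2023}, one imports the Sun--Du--Pan mean-value strategy \cite{Sun-Du-Pan-2023}, combining Vinogradov-type / efficient congruencing estimates for Weyl sums over primes with a smooth-to-rough transition and the $\psi$-function apparatus to pass from genuine $d$-th powers to Piatetski-Shapiro $d$-th powers. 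The cleanest route is: establish the restriction estimate for ordinary primes (or for $\bN$) with the optimal number of variables, then transfer it to $\cP^c$ at the cost of an admissible $c$-range, which dictates the first term inside each $\min$ in \eqref{c_0}.

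Third comes the \textbf{arithmetic/combinatorial heart}. With $\nu$ and its restriction estimate in hand, the transference principle (in the quantitative form going back to Green--Tao and refined by Browning--Prendiville and Chow) lets one replace $\mathbf 1_{\mathcal A^d}$ by a bounded model function $g$ of the same density, at the price of a small Gowers-type / $U^2$ error controlled by the restriction estimate. One then runs the density-increment argument on the model: if $\mathcal A$ carries no $K$-trivial solution of \eqref{equation}, the counting operator $\sum g(x_1)\cdots g(x_s)\mathbf 1[c_1x_1+\dots+c_sx_s=0]$ is small, forcing $g$ (hence $\mathcal A$) to concentrate on a sub-progression where its relative density increases. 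Iterating, as in Chow's quadruple-log bound, gives the stated $|\mathcal A|\ll \frac{x^{1/c}}{\log x}(\log\log\log\log x)^{(2-s)/(dc)+\eps}$; the exponent $(2-s)/(dc)$ rather than $(2-s)/d$ is simply the effect of the density of $\cP^c_x$ being $x^{1/c}/\log x$ rather than $x/\log x$. The condition $s\ge\bar s(d)$ is used so that there is room between the number of available variables and the restriction exponent, and $s\ge$ the arithmetic threshold guarantees the non-degenerate main term; the second term $\frac{d}{(\bar s(d)-1)s-d}$ in each $\min$ comes from balancing the minor-arc contribution in this counting step.

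The main obstacle, and the genuinely new technical content, is the restriction estimate in the reduced range $s\ge 2\lfloor d^2/2\rfloor+1$: one has to marry the Sun--Du--Pan mean-value machinery with the $\psi$-function detection of $\bN^c$ without losing so much in the $c$-aspect that the first entries of \eqref{c_0} become negative. Concretely, every time the $\psi$-expansion is truncated one picks up an exponential sum of the form $\sum e(\beta n^{1/c} + \alpha n)$ over primes with $n$ a $d$-th power variable, and these must be estimated by van der Corput / exponent-pair methods (the improved \cite[Lemma 5]{Akbal-Guloglu-2016}) uniformly enough to survive being raised to the $(4\bar s(d)-3)$-th power implicit in \eqref{c_0}. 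Getting the bookkeeping of these two nested estimates to close, with explicit constants, is where essentially all the work lies; the transference and density-increment parts are, by now, routine adaptations of \cite{Chow-2017} and \cite{Browning-Prendiville-2017}.
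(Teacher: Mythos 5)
Your proposal follows the same transference-principle architecture as the paper: construct the $W$-tricked majorant on the lifted $d$-th powers of Piatetski--Shapiro primes, establish Fourier decay and a restriction estimate (the latter via the Sun--Du--Pan mean-value idea combined with $\psi$-function detection and exponent-pair / van der Corput and Vinogradov-type bounds, which is indeed the new technical content and the source of the first entry in each $\min$ in \eqref{c_0}), verify the $K$-trivial saving, and then invoke the Browning--Prendiville transference proposition. Two small slips worth noting but not affecting the soundness of the approach: the hypothesis is that $\cA$ carries \emph{only} $K$-trivial solutions (not ``no $K$-trivial solution''), and the second entry $\frac{d}{(\bar s(d)-1)s-d}$ in each $\min$ comes from the $K$-trivial-saving bound $\sum_{\mathbf x\in K}\prod\nu(x_i)\ll \|\nu\|_1^s N^{-1-\eta}$ (Lemma~\ref{k-trivial}) rather than from a minor-arc phase of the main counting.
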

\begin{remark} Write
 $
 S(d)=\bar{s}(d)-1.
 $ 
 Then $S(d)$ is chosen to satisfy  the inequality 
\begin{equation}\label{1-zz}
\int_0^1\Big|\sum_{n\le x}e(\alpha n^{d})\Big|^{S(d)}d\alpha\ll x^{S(d)-d}L.
\end{equation}
For  $d=2, 3,$ we can choose  $S(d)=2^d=2\lfloor \frac{d^2}{2}\rfloor$ by applying \cite[Theorem 3]{Blomer-2006} and \cite[Theorem 2]{Vaughan-1986-}.  For $d\ge 4,$ one can follow the proof of \cite[Theorem 4.1]{Wooley-2012} verbatim to show that $S(d)=2\lfloor \frac{d^2}{2}\rfloor$ 
is admissible in \eqref{1-zz}.
 \end{remark}
 \medskip
 
 \begin{remark}   When $d=2,3,4$, by using  the mean value estimation in \cite[Lemma 10]{Akbal-Guloglu-2016} instead of  \eqref{1-zz} in the proof of Lemma \ref{falphaeta}  and \cite[Lemma 7.1]{Zhang-Zhang-2023}, we can extend the range of $c$ in Theorem \ref{MainThm} slightly  by choosing $c(2,s)=\min\{\frac{1}{28},\frac{1}{s-1}\},  c(3,s)= \min\{\frac{1}{255},\frac{3}{4s-3}\}$ and $c(4,s)=\min\{\frac{1}{1831},\frac{4}{9s-4}\}$, respectively.
    \end{remark}
 \medskip
 
  \begin{remark} 
  One can still apply Harman's sieve method to improve our $c$, see \cite{Sun-Du-Pan-2023} for more details.
  \end{remark}
 \medskip
$\textbf{Notations.}$ We abbreviate $(x_1, x_2, \ \dots,x_s)$ to $\bx$, and if $x$ is a positive integer, we  write $\mathbb [x]=\{1, 2,\dots, x\}$.  For any $ A\subset \mathbb N$, we write
$A_x=A\cap [x]$ and denote by $1_{ A}$ the indicator function of $A$. 
Let $\eps>0$ be an arbitrarily small  number and its values may differ between instances.
We write $\bT$ for the torus $\bR / \bZ$, and will identify $\bT$ with the unit interval $[0,1)$ throughout this paper.
For large positive number $x$, we  write $L = \log x.$
\bigskip

\section{Proof of Theorem \ref{MainThm}}
Let $f: \bZ \to \bC$. Define $ \| f \|_\infty = \sup_{n}|f(n)|$ and the  $L^q$ norm
$$\| f \|_q = \Bigl( \sum_n |f(n)|^q \Bigr)^{1/q}.$$
Here $q>0$ may not be an integer. If $\| f\|_1 < \infty$, then the Fourier transform of $f$   is  defined as 
$$
\widehat{f}(\alp)= \sum_{n\in \mathbb Z} f(n) e(\alp n),\quad \alpha\in\bT.
$$  
For functions on $\bT$, $L^q$  norm is taken with respect to the Haar probalility measure.
Following \cite{Browning-Prendiville-2017},  a majorant on $[N]$ is defined as  a non-negative function $\nu : \mathbb{Z}\rightarrow [0,\infty)$ with support contained in $[N]$.



To prove  Theorem \ref{MainThm}, we will use the following transference principle in  Browning and Prendiville  \cite[Proposition 2.8]{Browning-Prendiville-2017}.  
\begin{proposition}\label{Browning-Prendiville}
Let $s\ge 3$ and  $c_i\  (i=1, 2, \dots, s)$ be non-zero integers satisfying $c_1+c_2+\cdots+c_s=0$. Let $K$ be a union of $k$ proper subspaces of the hyperplane \eqref{MainEqn}. Suppose that $\nu$ is a majorant on $[N]$ satisfying
\begin{itemize}
  \item[\tiny$\bullet$] {\normalsize {\rm (Fourier decay)}~$\| \widehat{\nu} - \widehat{1}_{ [N]} \|_\infty \le \tet N$ for $\theta\in (0,1]$;}
      
  \vspace{1pt}
  \item[\tiny$\bullet$] {\normalsize {\rm (Restriction estimate)}~ $\sup_{|f| \le \nu} \int_\bT |\widehat{f}(\alp)|^u \d \alp \ll_u \| \nu \|_1^u N^{-1}$ for some $u\in [s-1,s)$;}
      
   \vspace{1pt}
  \item[\tiny$\bullet$] {\normalsize  {\rm (K-trivial saving)}~$\sum_{(x_1,\dots,x_s) \in K} \prod_{i=1}^s \nu(x_i) \ll_{k,s} \| \nu \|_1^s N^{-1-\eta}$ for some $\eta>0$.}
\end{itemize}
 \vspace{1pt}
Then for any  $ A\subset \supp(\nu),$ if $A$  contains only K-trivial solutions to  \eqref{MainEqn}, then
\begin{align*}
\sum_{n\in A} \nu(n)\ll\frac{N}{\min\{\log\log(1/\theta),\log N\}^{s-2-\eps}},
\end{align*}
where the implied constant depends at most on $\bc, u, \eta, k $ and $ \eps$.
\end{proposition}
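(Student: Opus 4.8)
The plan is to run the transference argument that yields this proposition. Write
\[
\Lambda(f_1,\dots,f_s)=\sum_{c_1x_1+\dots+c_sx_s=0}f_1(x_1)\cdots f_s(x_s)=\int_{\bT}\prod_{i=1}^s\widehat f_i(c_i\alpha)\d\alpha ,
\]
put $\widetilde g=1_A\nu$, so that the quantity to be bounded is $M:=\sum_{n\in A}\nu(n)=\|\widetilde g\|_1$, and note $\|\nu\|_1=\widehat\nu(0)=N+O(\theta N)\asymp N$ by Fourier decay at the origin. The hypothesis that $A$ has only $K$-trivial solutions turns immediately into an upper bound for the full weighted count: splitting $\Lambda(\widetilde g,\dots,\widetilde g)$ according to whether the solution $\bx\in A^s$ lies in $K$, the off-$K$ contribution is empty, while the $K$-contribution is at most $\sum_{\bx\in K}\prod_{i}\nu(x_i)\ll_{k,s}\|\nu\|_1^sN^{-1-\eta}$ by the $K$-trivial saving; hence
\[
\Lambda(\widetilde g,\dots,\widetilde g)\ll\|\nu\|_1^sN^{-1-\eta}\ll N^{s-1-\eta}.
\]

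Next I would transfer this to a bounded model. First a generalized von Neumann bound: for any comparison function $h'$ with $|h'|\lesssim\nu$, expand $\Lambda(\widetilde g,\dots,\widetilde g)-\Lambda(h',\dots,h')$ by multilinearity into $2^s-1$ terms, each carrying at least one factor $\widehat{\widetilde g-h'}$; since $s-u\le 1$, one may pull out $\|\widehat{\widetilde g-h'}\|_\infty^{\,s-u}$ and estimate what remains by H\"older's inequality with exponent $u$ and the restriction estimate (using that $\alpha\mapsto c_i\alpha$ preserves Haar measure on $\bT$), which gives
\[
\bigl|\Lambda(\widetilde g,\dots,\widetilde g)-\Lambda(h',\dots,h')\bigr|\ll_{s}\bigl(\|\widehat{\widetilde g-h'}\|_\infty/N\bigr)^{s-u}N^{s-1}.
\]
Then one runs a density-increment iteration: at each stage $\widetilde g$ is restricted to an arithmetic progression $P$ on which, by the Fourier-decay hypothesis, the local averages of $\nu$ still match the density of $P$ as long as $|P|$ is large relative to $1/\theta$; if the restricted transform has no large nonprincipal Fourier coefficient, a Fej\'er-type average $h'$ of $\widetilde g\,1_P$ is pointwise $\le1+o(1)$ and the display above shows that $\Lambda$ on $P$ falls far short of the value predicted by the relative density, whereas a large coefficient yields a subprogression on which the relative density strictly increases. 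The iteration stops once $|P|$ has dropped to a scale $R$ determined by $\theta$ and $N$, leaving a function $g_1:[R]\to[0,1+o(1)]$ of mean $\delta\gg M/N$ whose count is far below $\delta^sR^{s-1}$.

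Finally I would invoke a quantitative Roth-type theorem for the translation-invariant linear equation $c_1x_1+\dots+c_sx_s=0$ with bounded functions: such a $g_1$ must satisfy $\delta\ll_{\bc,\eps}(\log R)^{-(s-2)+\eps}$. For $s=3$ this is Roth's theorem --- the $\ell^\infty$ Fourier increment, iterated; for $s>3$ the exponent $s-2$ comes from genuinely exploiting the extra variables, e.g.\ by an induction on the number of variables in which each additional variable buys one more logarithmic saving, after reducing bounded functions to sets by a level-set argument and applying a Varnavides-type supersaturation. Tracking how the scale is spent in the density increment against the pseudorandomness quality $1/\theta$ yields $\log R\asymp\min\{\log\log(1/\theta),\log N\}$, whence $M=\delta N\ll N\bigl(\min\{\log\log(1/\theta),\log N\}\bigr)^{-(s-2)+\eps}$, as claimed. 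The main obstacle is precisely this last step together with the scale bookkeeping: one must use all $s$ variables to obtain the exponent $s-2$ rather than the exponent $1$ of a single Roth application, and one must balance the scale lost at each increment against $1/\theta$ so that the iteration still runs down to $R$ and produces the double logarithm $\log\log(1/\theta)$.
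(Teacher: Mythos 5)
The paper does not actually prove this proposition: it is quoted verbatim (as Proposition~2.1) from Browning--Prendiville, \cite[Proposition~2.8]{Browning-Prendiville-2017}, and the paper simply invokes it as a black box. So there is no in-paper proof to compare your sketch against; I can only assess the sketch on its own terms against what the Browning--Prendiville argument actually requires.

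Your skeleton --- reduce to $\Lambda(\widetilde g,\dots,\widetilde g)\ll N^{s-1-\eta}$ via the $K$-trivial saving, run a generalized von Neumann comparison using the restriction estimate with exponent $u\in[s-1,s)$ together with the Fourier decay, and finally feed a bounded model into a Roth-type theorem for the $s$-variable translation-invariant equation --- is the right shape, and the remark that $\|\nu\|_1\asymp N$ by Fourier decay at $0$ is correct. But there are two genuine gaps. First, the exponent $s-2$ in the Roth-type theorem for bounded functions is the entire point, and your suggested mechanism (``an induction on the number of variables in which each additional variable buys one more logarithmic saving'' by iterated Roth/density increments) does not produce it. A bare density-increment argument for an $s$-variable translation-invariant equation yields a peak of the balanced Fourier transform of size $\gg\delta^{(s-1)/(s-2)}N$, hence an increment $\delta\mapsto\delta+c\,\delta^{(s-1)/(s-2)}$; iterating this with polynomial scale loss returns a bound of the form $(\log\log N)^{-(s-2)}$, one log weaker than what is claimed. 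Getting $(\log R)^{-(s-2)+\eps}$ requires Bogolyubov--Sanders/Croot--Sisask technology for translation-invariant equations (in the spirit of Bloom's ``Translation-invariant equations and the method of Sanders''), not an iterated Roth step, and this nontrivial input needs to be cited or supplied rather than hand-waved. Second, the crucial quantitative dependence $\min\{\log\log(1/\theta),\log N\}$ is asserted at the end but never derived from your ``density increment against pseudorandomness'' bookkeeping; in the transference argument it arises from the construction of the bounded (dense) model by a Fourier cutoff controlled by $\theta$, and your attempt to run an increment on the $\nu$-weighted side while maintaining $\nu$-flatness of the progressions does not visibly produce the double logarithm in $1/\theta$. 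Both of these points are the actual content of Browning--Prendiville's proof, so as written the proposal sketches the framing but omits the two steps that carry the theorem.
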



 We begin by defining the  majorant $\nu$. Let $x$ be a large positive number and write
$w = \frac12 \log \log x$ and $ W= 4d^3 \prod_{p \le w} p.$
It follows by the prime number theorem that
$W\asymp e^{w} =\sqrt{ \log x}.$
Let $b\in[W]$ with 
$
-b\in \left(\bZ/W\bZ\right)^{\times d}=\{z^{d}: z\in \mathbb (\bZ/W\bZ)^{\times}\},
$ 
and write
$$
\sig(b)=\left|\left\{z\in[W]:\ z^d\equiv-b\ {\rm mod }\ W \right\}\right|.
$$
 We define the majorant function $\nu_{b}:\mathbb N\to \mathbb R$ as follows:
\begin{align*} \label{nudef}
\nu_b\left(n\right) =
\begin{cases}
\frac {c\varphi(W)}{ \sig(b) W} p^{d-\frac{1}{c}} \log p, & \text{if } Wn-b = p^d \text{ for some } p \in \cP^c_{x}, \\
0, &\text{if \ otherwise}.
\end{cases}
\end{align*}
 For $\cA\subset \cP^c_{x}$, we write
\begin{equation} \label{cAdef}
\cB(b) = \{ n \in \bZ: Wn-b = p^d \text{ for some } p \in \cA \}.
\end{equation}
It is easy to see that
\begin{equation} \label{Ddef}
\cB(b)\subset \supp(\nu_b)\subset [N]\quad {\rm  where}\quad N =\lfloor x^d/W\rfloor + 1.
\end{equation}

\begin{lemma} [Density transfer] \label{DT}  Let $c\in (1, 1+\frac{391}{2426})$ and $\cA\subset \cP^c_{x}.$
    Then there exists $ b\in [W]$ with $-b\in (\bZ/W\bZ)^{\times d}$ such that  for $\cB=\cB(b), \nu=\nu_{b},$ there holds
    \begin{equation} \label{DensityTransfer}
    \sum_{n \in \cB} \nu\left(n\right) \gg \delta^d N\quad {\rm with}\quad
    \delta=|\cA|^c\frac{\log^cx}{x}.
    \end{equation}
\end{lemma}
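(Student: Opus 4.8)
\textbf{Proof strategy for Lemma \ref{DT}.} The plan is to establish \eqref{DensityTransfer} by a pigeonhole argument over the residue classes $b$, exactly as in the classical $W$-trick adapted to $d$-th powers. First I would average the quantity $\sum_{n\in\cB(b)}\nu_b(n)$ over all admissible $b\in[W]$, i.e.\ those with $-b\in(\bZ/W\bZ)^{\times d}$. For each such $b$, unfolding the definitions of $\cB(b)$ and $\nu_b$ gives
\begin{align*}
\sum_{n\in\cB(b)}\nu_b(n)=\frac{c\varphi(W)}{\sig(b)W}\sum_{\substack{p\in\cA\\ p^d\equiv -b\ (\mathrm{mod}\ W)}}p^{d-\frac1c}\log p.
\end{align*}
Summing over the admissible $b$ and interchanging the order of summation, every $p\in\cA$ (after removing the $O(\log x)$ primes dividing $W$, whose contribution is negligible) lands in exactly one admissible class $b=b(p)$ with multiplicity accounted for by $\sig(b)$; thus the total becomes $c\varphi(W)\sum_{p\in\cA}p^{d-\frac1c}\log p\,\big/W$ up to the negligible terms. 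Since $p\le x$ and $p\asymp x$ for a positive proportion is not guaranteed, I would instead bound $p^{d-\frac1c}\log p$ from below trivially on the dyadic top range or, more robustly, use that $\sum_{p\in\cA}p^{d-\frac1c}\log p \gg |\cA|\,x^{d-\frac1c}\log x$ fails in general — so the cleaner route is to note $p^{d-\frac1c}\log p\ge$ a fixed constant times $x^{d-\frac1c}$ only for $p$ in a final dyadic block; the standard fix is to first replace $\cA$ by its intersection with $(x/2,x]$ at the cost of a constant, which is harmless for the stated conclusion.

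Having the average lower bound $\sum_b \sum_{n\in\cB(b)}\nu_b(n)\gg \varphi(W)W^{-1}|\cA|\,x^{d-\frac1c}\log x$, and noting the number of admissible $b$ is at most $W$, pigeonhole produces a single $b$ with
\begin{align*}
\sum_{n\in\cB(b)}\nu_b(n)\gg \frac{\varphi(W)}{W^2}\,|\cA|\,x^{d-\frac1c}\log x.
\end{align*}
Now I would compare this with $\delta^d N$. Recalling $N=\lfloor x^d/W\rfloor+1\asymp x^d/W$ and $\delta=|\cA|^c\log^c x\,/x$, we get $\delta^d N\asymp |\cA|^{cd}\log^{cd}x\, x^{d-cd}\,W^{-1}$. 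Comparing exponents, the claim \eqref{DensityTransfer} would follow provided $|\cA|\,x^{d-\frac1c}\log x \gg \varphi(W)^{-1}W\cdot |\cA|^{cd}\log^{cd}x\,x^{d-cd}$; since $\varphi(W)^{-1}W\ll \log\log x$ and the power of $x$ on the right is $x^{d-cd}$ versus $x^{d-1/c}$ on the left, and $cd\ge d\cdot 1 > 1/c$ would be the wrong direction — so the correct accounting uses the prime number theorem in Piatetski-Shapiro primes, $|\cP^c_x|\asymp x^{1/c}/\log x$, which forces $|\cA|\le|\cP^c_x|\ll x^{1/c}/\log x$, hence $\delta\ll 1$ and $\delta^d$ absorbs the loss; this is where the hypothesis $c\in(1,1+\tfrac{391}{2426})$ enters, guaranteeing the PNT-type estimate is available in that range.

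The one genuinely nontrivial input, and the main obstacle, is controlling $\sum_{p\in\cA} p^{d-\frac1c}\log p$ from below in terms of $\delta^d N$ with the \emph{exact} power $\delta^d$ rather than $\delta$ — i.e.\ tracking why a $d$-fold product of densities appears. This comes from the fact that $\nu_b$ is normalized so that $\|\nu_b\|_1\asymp N$ (a mean-value computation using the Piatetski-Shapiro prime number theorem in short intervals / arithmetic progressions modulo $W$), while $\sum_{n\in\cB(b)}\nu_b(n)$ measures the $\nu_b$-mass of a set of the same cardinality as (a positive proportion of) $\cA$; the ratio $|\cB(b)|/N \asymp \delta$ gets amplified to $\delta^d$ only through the weight $p^{d-\frac1c}$, which is \emph{not} constant across $[1,x]$, so one must either restrict to a dyadic block or invoke a lower bound for $\sum_{p\le x}p^{d-\frac1c}\log p$. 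I would handle this by the dyadic-restriction step described above, after which everything reduces to the Piatetski-Shapiro PNT and the elementary identity $\sum_{b\ \mathrm{adm}}\sig(b)^{-1}\cdot(\text{class count})$; the bookkeeping with $\varphi(W)/W$ and $\sig(b)$ is routine once the averaging is set up. The remaining verification that $c<1+\tfrac{391}{2426}$ suffices is exactly the range in which the Piatetski-Shapiro prime number theorem (with a power-saving error term adequate for the $W$-trick with $W\asymp\sqrt{\log x}$) is known, e.g.\ via the exponential sum estimates cited earlier in the paper.
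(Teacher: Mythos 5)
Your high-level strategy — average $\sum_{n\in\cB(b)}\nu_b(n)$ over the admissible residues $b$, interchange sums so that each $p\in\cA$ with $(p,W)=1$ contributes once, and then pigeonhole — is the correct skeleton, and you correctly identify that the range $c<1+\tfrac{391}{2426}$ is exactly the Rivat--Sargos range in which the Piatetski-Shapiro prime number theorem is available. However, the execution has genuine gaps that would have to be repaired before this becomes a proof.

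The central problem is the lower bound for $\sum_{p\in\cA}p^{d-1/c}\log p$, which you rightly flag as the crux. Your proposed fix — replace $\cA$ by $\cA\cap(x/2,x]$ — does not work: $\cA$ is an arbitrary subset of $\cP^c_x$ and may be entirely contained in $[1,x/2]$, in which case the restricted set is empty and the bound collapses. The correct move is an \emph{adaptive} truncation: choose $y$ so that $|\cP^c_y|\le|\cA|/2$; the Piatetski-Shapiro PNT makes $y\asymp(|\cA|\log x)^c$ admissible. Then $|\cA\cap(y,x]|\ge|\cA|/2$ while every surviving $p$ satisfies $p\ge y$, whence
\[
\sum_{p\in\cA}p^{d-1/c}\log p\;\ge\;\tfrac{|\cA|}{2}\,y^{d-1/c}\log y\;\gg\;y^d,
\]
and $y^d$ is exactly what must be compared with $\delta^d x^d$. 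Note how $y$ depends on $|\cA|$; a fixed dyadic block cannot see this, and this adaptive choice is precisely ``why a $d$-fold power of $\delta$ appears''. Your alternative heuristic — that $|\cB(b)|/N\asymp\delta$ and the weight ``amplifies'' this to $\delta^d$ — is not correct; $|\cB(b)|/N$ is of a completely different order than $\delta$, and the exponent $d$ does not arise from amplifying any such ratio.

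There are also bookkeeping errors that derail the final comparison. You compute $\delta^d N\asymp|\cA|^{cd}\log^{cd}x\cdot x^{d-cd}W^{-1}$, but $\delta=|\cA|^c\log^c x/x$ gives $\delta^d=|\cA|^{cd}\log^{cd}x\cdot x^{-d}$, so $\delta^d N\asymp|\cA|^{cd}\log^{cd}x/W$ with no extraneous power of $x$. This error is what led you to the ``wrong direction'' confusion and to wave at the PNT as a rescue. In addition, the pigeonhole should be over the actual number of admissible residues $b$, which equals $\varphi(W)/\sig_0$ (with $\sig_0=\sig(b)$ constant over admissible $b$), not over $W$; pigeonholing by $W$ throws away a factor $\asymp\sig_0\log\log\log x$. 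With the corrected pigeonhole one gets $\sum_{n\in\cB(b)}\nu_b(n)\gg W^{-1}\sum_{p\in\cA}p^{d-1/c}\log p$, and the adaptive truncation then closes the argument.
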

\begin{proof}
    See \cite[Lemma 2.6]{Ren-Zhang-Zhang-2023} and \cite[Lemma 2.6]{Zhang-Zhang-2023}.
\end{proof}
\begin{lemma}[Fourier decay]\label{Fourierdecay}
    Let $\nu$ be defined as in {\rm Lemma \ref{DT}.} Then for $c \in(1, 1+c_1(d))$, we have
\begin{equation*}
\|\widehat\nu-\widehat{1}_{[N]}\|_{\infty}\ll w^{\eps-\frac12}N,
\end{equation*}
where $c_1(2)=\frac{7}{75}, c_1(3)=\frac{3}{77},$ and for $d\ge 4,$
\begin{align*}\label{c1d}
c_1(d)=\begin{cases}
\min\left\{h(d), k(d)\right\},    &{\rm if}\ 2 \mid d,\\
\min\left\{h(d), l(d)\right\},    &{\rm if}\ 2\nmid d\\
\end{cases}
\end{align*}
with 
\begin{equation}\label{hkl}
h(d)=\frac{1}{d(d+1)^2-1}, \quad  k(d)=\frac{2}{27d^2-14},\quad  l(d)=\frac{2}{27d^2-5}.
\end{equation}
\end{lemma}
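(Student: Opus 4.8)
The plan is to compare $\widehat{\nu_b}$ with $\widehat{1}_{[N]}$ pointwise on $\T$ by the circle method, after first peeling off the Piatetski--Shapiro condition. Unfolding the definition of $\nu_b$,
\[
\widehat{\nu_b}(\alp)=\frac{c\,\varphi(W)}{\sig(b)W}\sum_{\substack{p\le x,\ p\in\cP^c\\ p^d\equiv -b\ (W)}}p^{d-1/c}(\log p)\,e\!\Big(\frac{\alp(p^d+b)}{W}\Big).
\]
Write $\gam=1/c\in(1/2,1)$. Detecting $p\in\cP^c$ by $1_{\cP^c}(p)=\floor{-p^{\gam}}-\floor{-(p+1)^{\gam}}$ and then $\floor{t}=t-\tfrac12-\psi(t)$ with $\psi$ the sawtooth function, the sum splits into a principal part --- in which the detection contributes the smooth factor $(p+1)^{\gam}-p^{\gam}=\gam p^{\gam-1}+O(p^{\gam-2})$, so that $\frac{c\varphi(W)}{\sig(b)W}p^{d-1/c}\cdot\gam p^{\gam-1}\log p=\frac{\varphi(W)}{\sig(b)W}p^{d-1}\log p$ --- and remainder parts carrying the oscillatory factors $\psi(-p^{\gam})$ and $\psi(-(p+1)^{\gam})$. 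Using Vaaler's approximation to replace each $\psi$ by a truncated Fourier polynomial $\sum_{1\le|h|\le H}c_h\,e(\mp h p^{\gam})$ plus an $L^1$-controlled remainder, the whole problem reduces to estimating, uniformly in $\alp\in\T$ and $0\le h\le H$, the prime exponential sums
\[
\mathfrak{S}(\alp;h)=\sum_{\substack{p\le x\\ p^d\equiv -b\ (W)}}(\log p)\,e\!\Big(\frac{\alp p^d}{W}-h\,p^{\gam}\Big)
\]
and their variants with $p^{\gam}$ replaced by $(p+1)^{\gam}$; the case $h=0$ carries the main term, and the $O(p^{\gam-2})$ discrepancy together with the Vaaler remainder are handled by the same machinery.

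On the major arcs --- $|\alp-a/q|\le w^{1/2}/(qN)$ with $(a,q)=1$ and $q\le w^{1/2}$ --- only $\mathfrak S(\alp;0)$ survives, and I would produce an asymptotic matching $\widehat{1}_{[N]}(\alp)$. Since $q$ is tiny and the summation runs over primes in a fixed residue class modulo $W$, which absorbs all primes below $w$ and so trivialises the local factors, the Siegel--Walfisz theorem for primes in arithmetic progressions together with partial summation across the substitution $p\mapsto p^d$ gives $\widehat{\nu_b}(\alp)=\widehat{1}_{[N]}(\alp)+O\!\big(N\exp(-c'\sqrt{\log x})\big)$ there, much stronger than needed. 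Off the major arcs one has $|\widehat{1}_{[N]}(\alp)|\ll\min(N,\|\alp\|^{-1})\ll w^{-1/2}N$ automatically, so it remains only to show $|\widehat{\nu_b}(\alp)|\ll w^{\eps-1/2}N$ on the minor arcs, i.e.\ to save a factor $\gg w^{1/2}$ over the trivial bound in $\mathfrak S(\alp;h)$ for every $0\le h\le H$.

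For the minor--arc bound I would fix a Dirichlet approximation to $\alp$, apply Vaughan's identity (or a Heath--Brown identity of suitable level) to $\sum_p(\log p)(\cdots)$, and decompose $\mathfrak S(\alp;h)$ into $O(L^{O(1)})$ Type~I sums $\sum_{m\sim M}a_m\sum_{n\sim N'}e\big(\alp(mn)^d/W-h(mn)^{\gam}\big)$ with $M$ short, and Type~II sums with $M,N'$ of medium size and a bilinear coefficient. Type~I sums are treated by completion/Poisson summation in $n$ followed by the van der Corput and exponent--pair machinery for the one--variable phase $\alp t^d/W-h t^{\gam}$; Type~II sums by a Weyl/van der Corput differencing in both variables, again reducing to one--variable phases. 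The polynomial component $\alp t^d/W$ is dealt with by Weyl's and Vinogradov-type estimates for degree $d$ exactly as in \cite{Ren-Zhang-Zhang-2023,Zhang-Zhang-2023}, which accounts for the exponent $h(d)=1/(d(d+1)^2-1)$, while the fractional component $h t^{\gam}$, with $\gam$ just below $1$, is treated by the sharper derivative--test exponential sum estimates of the present paper --- reproducing $c_1(2)=7/75$ and $c_1(3)=3/77$ for $d=2,3$, and for $d\ge4$ replacing \cite[Lemma 5]{Akbal-Guloglu-2016} to give $k(d)=2/(27d^2-14)$ or $l(d)=2/(27d^2-5)$ according to the parity of $d$. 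Balancing the truncation length $H$ against the width $w^{1/2}/(qN)$ of the major arcs, one finds that each $\mathfrak S(\alp;h)$ gains a factor $\ll x^{-\eta}$ for some $\eta=\eta(c,d)>0$ on the minor arcs provided $c<1+c_1(d)$; since $x^{-\eta}\ll w^{\eps-1/2}$, summing over $0\le h\le H$ and collecting the Vaaler and discrepancy remainders yields $|\widehat{\nu_b}(\alp)|\ll w^{\eps-1/2}N$ on the minor arcs, which finishes the proof.

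The main obstacle is exactly this last step: one must control prime exponential sums whose phase simultaneously carries a degree-$d$ polynomial term $\alp p^d/W$ and a fractional-power term $h p^{1/c}$ with $1/c$ arbitrarily close to $1$, uniformly in the twist $h$ and in the parameters of the Dirichlet approximation. The van der Corput estimates for the term $h p^{1/c}$ deteriorate as $c\to1$, and the condition $c<1+c_1(d)$ is precisely the threshold at which the combined Type~I/Type~II savings remain positive; extracting the improved ranges $k(d),l(d)$ for $d\ge4$ is what the new exponential sum estimates of the paper are designed to do, in place of the classical estimates underlying \cite[Lemma 5]{Akbal-Guloglu-2016}.
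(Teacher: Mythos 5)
The paper disposes of this lemma with a one-line citation to Proposition 4.1 of \cite{Ren-Zhang-Zhang-2023} and \cite{Zhang-Zhang-2023}, so there is no in-paper argument to compare against step by step. Your plan --- detect membership of $\cP^c$ via $\lfloor -p^{1/c}\rfloor-\lfloor -(p+1)^{1/c}\rfloor$, Vaaler-approximate the sawtooth $\psi$, reduce to twisted prime exponential sums $\mathfrak S(\alpha;h)$, split $\T$ into $w^{1/2}$-scale major and minor arcs, use Siegel--Walfisz on the major arcs and Vaughan's identity with van der Corput / derivative-test / exponent-pair estimates on the minor arcs --- is exactly the standard template for Fourier-decay bounds in the Piatetski--Shapiro setting, and it is the template the cited propositions (following Akbal--G\"ulo\u{g}lu) implement. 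So the scaffolding is sound.

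The main weakness is that the sketch does not actually account for the specific thresholds $c_1(d)$, which are the whole content of the lemma. You attribute $h(d)=\tfrac{1}{d(d+1)^2-1}$ to Weyl/Vinogradov estimates for the degree-$d$ polynomial term ``exactly as in'' the earlier papers, and $k(d), l(d)$ to sharper estimates for the fractional part $hp^{1/c}$ alone. But the paper's own derivation of these constants (the proof of Lemma~\ref{x831c}, which is the analogous exponential-sum input on the restriction side) shows that $h(d)$ comes from Heath-Brown's new $k$th-derivative test (Lemma~\ref{estimation-1}) applied to the \emph{combined} phase $m^d\theta+hm^{1/c}$, whose decisive $(d+1)$th derivative is driven by the fractional term, while $k(d),l(d)$ come from Bourgain's Vinogradov mean-value bound (Lemma~\ref{estimation-2}) applied after a Taylor expansion of $(1+t)^{1/c}$ --- again tracking the interaction of both components, not the fractional part in isolation. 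Moreover, the earlier papers you invoke for the polynomial part in fact built their Fourier-decay range on \cite[Lemma 5]{Akbal-Guloglu-2016}, which this paper explicitly improves; and the appearance of $\min\{h(d),k(d)\}$ (resp.\ $\min\{h(d),l(d)\}$) for \emph{all} $d\ge 4$ signals that two separate constraints, presumably coming from distinct sub-sums in the Vaughan decomposition, are simultaneously active, rather than one constraint per phase component. Also, on the major arcs the $h\ne 0$ terms do not vanish ``automatically''; they must be bounded by the same exponential-sum machinery (uniformly in $\alpha$), and you should say so. None of this is a structural gap --- the strategy is correct --- but as written the bookkeeping would not reproduce the stated $c_1(d)$ without redoing which estimate is applied to which bilinear sub-sum.
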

\begin{proof}
    See \cite[Proposition 4.1]{Ren-Zhang-Zhang-2023} and \cite[Proposition 4.1]{Zhang-Zhang-2023}.
\end{proof}

\begin{lemma}[Restriction estimate]\label{psiupsilon}
    Let $f:\,\Z\to\C$ be an arithmetic function which satisfies $|f|\le\nu$ with $\nu$ the majorant being defined in  {\rm Lemma \ref{DT}.} 
    Then for any $u>S(d)(1+\frac{2(c-1)}{1-\theta(d,c)})$ where $S(d)=2\lfloor \frac{d^2}2\rfloor$ and $c\in\left(1, 1+c_2(d)\right)$, we have
    \begin{align*}
    \int_{\bT}\left|\widehat{f}\left(\alpha\right)\right|^ud\alpha\ll_u N^{u-1},
    \end{align*}
  where $\theta(2,c)=\frac{4c+7}{13},\  \theta(3,c)=\frac{15c+14}{30}, $
  \begin{align*}
\theta(d,c)&=\frac{1+c}2\cdot\begin{cases}
(1-h(d)), \ & {\rm if}\  4\le d\le 11,\\
(1-k(d)), & {\rm if}\ 2 \mid d,\ d\ge 12,\\
(1-l(d)), & {\rm if}\ 2\nmid d,\ d\ge 12;
\end{cases}
    \end{align*}
   and $c_2(2) =   \frac{1}{54}, \ c_2(3)= \frac{1}{495}, $ 
    \begin{align*}
    c_2(d)&=\begin{cases}
   \frac{2h(d)}{4S(d)+1-h(d)}, & {\rm if}\ 4\le d\le 11, \\
    \frac{2k(d)}{4S(d)+1-k(d)}, &{\rm if} \ 2 \mid d,\  d\ge 12,\\
    \frac{2l(d)}{4S(d)+1-l(d)}, &{\rm if} \ 2\nmid d,\ d\ge 12.
    \end{cases}
    \end{align*}
\end{lemma}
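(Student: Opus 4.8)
\textbf{Proof proposal for Lemma \ref{psiupsilon}.}

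The plan is to establish the restriction estimate by interpolating between an $L^2$ bound and an $L^\infty$ bound for $\widehat f$, exactly in the spirit of the Bourgain--Wooley efficient congruencing circle method, but organized so that the exponent $S(d)$ in the mean-value inequality \eqref{1-zz} is fed in as a black box. First I would record the trivial pointwise bound $\|\widehat f\|_\infty \le \|f\|_1 \le \|\nu\|_1 \asymp N^{1-1/d}\cdot(\text{something})$ — more precisely, using the definition of $\nu_b$ and the prime number theorem for Piatetski--Shapiro primes one checks $\|\nu\|_1 \asymp N$, hence $\|\widehat f\|_\infty \ll N$. Next I would prove the core $L^{2k}$-type estimate: writing $k = S(d)$ and using $|f| \le \nu$, one has $\int_\bT |\widehat f(\alpha)|^{2k}\,d\alpha \le \int_\bT |\widehat \nu(\alpha)|^{2k}\,d\alpha$, and this last integral counts weighted solutions of $Wn_1 - b + \dots = Wn_k - b + \dots$ i.e. of $p_1^d + \dots = p_{k}^d + \dots$ with $p_i \in \cP^c_x$. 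Dropping the Piatetski--Shapiro and primality constraints (at the cost of the usual $\log$ and the $c$-dependent saving absorbed into the $2(c-1)/(1-\theta)$ slack) reduces this to the classical Weyl-sum moment $\int_0^1 |\sum_{n\le x} e(\alpha n^d)|^{2k}\,d\alpha \ll x^{2k-d}L$, which is precisely \eqref{1-zz} with $S(d) = 2\lfloor d^2/2\rfloor$.

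The subtlety — and this is where the parameter $\theta(d,c)$ enters — is that one cannot simply discard the Piatetski--Shapiro condition for free; the characteristic function of $\bN^c$ contributes a density factor $x^{1/c - 1}$ per variable, so to keep the count of the right size one must exploit a Fourier/van der Corput expansion of $1_{\bN^c}$, and the error terms in that expansion are controlled only when the relevant exponential sums over $n^d$ (restricted to a short interval or twisted by $e(\beta \lfloor n^c\rfloor)$-type phases) satisfy good bounds; the quantity $\theta(d,c)$ is exactly the exponent coming out of those bounds (via $h(d), k(d), l(d)$ from \eqref{hkl}), and the range $c \in (1, 1+c_2(d))$ with $c_2(d) = 2\cdot(\text{coeff})/(4S(d)+1-(\text{coeff}))$ is what makes the slack exponent $S(d)(1+\tfrac{2(c-1)}{1-\theta(d,c)})$ still below the target $u$. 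Concretely I would follow the structure of \cite[Proposition 5.1]{Ren-Zhang-Zhang-2023} and \cite[Proposition 5.1]{Zhang-Zhang-2023} but replace their restricted-estimate input by the argument of \cite{Sun-Du-Pan-2023}: split $\widehat f$ into major-arc and minor-arc contributions, handle the minor arcs by combining Lemma \ref{Fourierdecay}-type Weyl bounds with the $L^{2k}$ moment above (a Hölder/$L^p$-interpolation step: $\int |\widehat f|^u \le \|\widehat f\|_\infty^{u-2k}\int|\widehat f|^{2k}$ up to dyadic level-set refinements), and handle the major arcs by the standard singular-series computation which is $\ll N^{u-1}$ with room to spare since $u > 2k \ge 2d+2$.

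Assembling the pieces: from $\int_\bT |\widehat f|^{2k} \ll N^{2k-1}\cdot(\text{admissible loss})$ and $\|\widehat f\|_\infty \ll N$, for any $u \ge 2k$ we get $\int_\bT |\widehat f|^u \ll N^{u-2k}\cdot N^{2k-1}\cdot(\text{loss}) = N^{u-1}\cdot(\text{loss})$, and the loss is $\ll_u 1$ precisely when $u$ exceeds the stated threshold $S(d)(1+\tfrac{2(c-1)}{1-\theta(d,c)})$ — the extra factor $1 + \tfrac{2(c-1)}{1-\theta(d,c)}$ being the price of converting the short Piatetski--Shapiro sum into a full Weyl sum and absorbing the density defect. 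The cases $d=2,3$ are cleaner because the optimal moment exponents $2^d$ are available unconditionally from \cite[Theorem 3]{Blomer-2006} and \cite[Theorem 2]{Vaughan-1986-}, giving the explicit $\theta(2,c) = \tfrac{4c+7}{13}$, $\theta(3,c) = \tfrac{15c+14}{30}$ and $c_2(2) = \tfrac1{54}$, $c_2(3) = \tfrac1{495}$ directly; for $d \ge 4$ one uses the Wooley-type bound quoted in the first Remark together with the new exponential sum estimates the authors advertise in the introduction. The main obstacle I anticipate is bookkeeping the short-interval exponential sums over $n^d$ with the Piatetski--Shapiro weight sharply enough that the resulting admissible exponent is exactly $\theta(d,c)$ rather than something weaker — i.e. making sure the van der Corput/Weyl estimates used in the expansion of $1_{\bN^c}$ are applied with the optimal number of differencing steps for each residue class of $d$ modulo $2$, which is what produces the three-way case split in both $\theta(d,c)$ and $c_2(d)$.
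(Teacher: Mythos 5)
Your proposal correctly identifies the ingredients — the $S(d)$-th moment inequality \eqref{1-zz}, a major/minor-arc analysis, and the conversion of the Piatetski--Shapiro-weighted exponential sum into a full Weyl sum at a cost encoded by $\theta(d,c)$ via Lemma \ref{x831c} — but the mechanism that actually achieves the threshold $u > S(d)\bigl(1+\frac{2(c-1)}{1-\theta(d,c)}\bigr)$ is missing, and the interpolation step as written is false. Using only $\|\widehat f\|_\infty \ll N$ and the $L^{S(d)}$ moment, Hölder gives $\int |\widehat f|^u \le \|\widehat f\|_\infty^{u-S(d)} \int |\widehat f|^{S(d)} \ll N^{u-1+S(d)(c-1)/(dc)} L^{S(d)}$; the extra power $N^{S(d)(c-1)/(dc)}$ is present for every $c>1$ and does \emph{not} shrink when $u$ grows, so the claim "the loss is $\ll_u 1$ precisely when $u$ exceeds the stated threshold" does not follow from what you wrote. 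The parenthetical "up to dyadic level-set refinements" gestures at the real argument but hides exactly the step that makes the lemma true.

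What the paper actually does (Lemma \ref{falphaeta}, then lifting from the weight $\tau$ to the prime weight $\nu$) is a Bourgain-style level-set estimate: one shows $\meas\{\theta : |\widehat\psi(\theta)| > \delta NL\} \ll \delta^{-S(d)-u_1-\eps_1} N^{-1}$ with $u_1 = \tfrac{2S(d)(c-1)}{1-\theta(d,c)}$, which then yields the $L^u$ bound for $u > S(d)+u_1$ by the standard dyadic decomposition of Browning--Prendiville. For small $\delta$ this is plain Chebyshev via \eqref{1-zz}. The hard regime, $\delta > N^{(\theta(d,c)-1)/(2dc)+\eps_3}$, is handled by selecting $N^{-1}$-spaced points $\theta_1,\dots,\theta_R$ in the level set and proving $R \ll \delta^{-2d-\eps_1}$ via Cauchy--Schwarz duplication: this reduces to bounding $\sum_{r,r'}|\widehat\tau(\theta_r-\theta_{r'})|^\kappa$ for $\kappa=d+\eps_1/3$, which uses Lemma \ref{x831c} to swap $\widehat\tau$ for the smooth Weyl-type transform $\widehat\mu$ up to error $x^{d-(1-\theta(d,c))/c+\eps}$, and then a Chow-type major/minor arc bound on $\widehat\mu$. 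The exponent $2d$ — which is strictly less than $S(d)$ — is the whole point: it is what makes the spaced-point count small enough to match the Chebyshev regime and close the argument. Your proposal never produces this $R \ll \delta^{-2d-\eps_1}$ bound, and without it the asserted threshold for $u$ is out of reach.
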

We will prove Lemma \ref{psiupsilon} by using  Bourgain's strategy and  Weyl sum estimates.  Note that $0<\frac{2S(d)(c-1)}{1-\theta(d,c)}<1$ for $c\in\left(1, 1+c_2(d)\right)$.  
Thus $\bar{s}(d)=S(d)+1$ can  be obtained in Theorem \ref{MainThm}.  However, the range of $c$ is worse than that in  \cite[ Proposition 5.1]{Ren-Zhang-Zhang-2023} and \cite[Proposition 5.1]{Zhang-Zhang-2023}. We will give the proof of Lemma \ref{psiupsilon} in  Section \ref{Restriction}.

\begin{lemma}[K-trivial saving]\label{k-trivial}
    Let $\nu$ be defined in {\rm Lemma \ref{DT}}  and $S(d)$ be as in {\rm Lemma \ref{psiupsilon}}.  Then for $s\ge S(d)+1$ and $c\in(1, 1+\frac{d}{sS(d)-d})$,  there holds
     \[
 \sum_{(x_1,\dots,x_s) \in K} \prod_{i=1}^s \nu(x_i) \ll_{k,s,\eta} \| \nu \|_1^s N^{-1-\eta},  
 \]
where $\eta=\frac{dc-s(c-1)S(d)}{dc(s-1)}-\eps>0$.
\end{lemma}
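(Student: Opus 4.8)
The plan is to bound the $K$-trivial sum by first reducing it to the case of a single proper subspace $V \subset \{c_1x_1 + \dots + c_sx_s = 0\}$ containing the diagonal. Since $K$ is a union of $k$ such subspaces, summing over $K$ costs only a factor $k$, so it suffices to prove $\sum_{\bx \in V} \prod_i \nu(x_i) \ll \|\nu\|_1^s N^{-1-\eta}$ for each such $V$. Because $V$ contains the diagonal but is a proper subspace of the hyperplane, at least one coordinate — say $x_1$ — is determined by the others through a nontrivial linear relation $x_1 = \sum_{j \ge 2} a_j x_j$ with not all weight concentrated on a single $x_j$ (the diagonal condition forces $\sum a_j = 1$, and properness gives a second independent constraint so that $x_1$ genuinely depends on at least two of $x_2, \dots, x_s$, or equivalently two coordinates are tied together). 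In either case one gets a diophantine-type constraint linking the prime variables.

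The key analytic input is the $L^1$-type estimate coming from the restriction machinery. First I would translate $\nu$ back to a weighted count over Piatetski-Shapiro primes $p$ with $Wn - b = p^d$; then $\sum_n \nu(n) = \|\nu\|_1 \asymp N$ up to constants (this is the majorant normalization, consistent with Lemma \ref{DT} and the prime number theorem for $\cP^c$). For the subspace sum, the standard device is to write $\prod_i \nu(x_i)$, replace the indicator of $V$ by an integral over $\bT$ of the appropriate exponential (picking out the linear relation modulo a suitable modulus, or more simply, using that $V$ imposes that a fixed integer linear form vanishes), and then apply Hölder in the Fourier variables together with the restriction estimate of Lemma \ref{psiupsilon} to save a power of $N$. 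Concretely: the number of variables is $s \ge S(d)+1$, which is exactly the threshold at which Lemma \ref{psiupsilon} applies (with $u$ slightly above $S(d)$), so one has enough variables to absorb the $L^u$ norm bound $\int_\bT |\widehat{f}|^u \ll N^{u-1}$ for $f$ dominated by $\nu$. The exponent $\eta = \frac{dc - s(c-1)S(d)}{dc(s-1)} - \eps$ is precisely what one reads off after balancing the saving from the restriction estimate against the loss from passing through Hölder with $s-1$ copies in the denominator; the hypothesis $c < 1 + \frac{d}{sS(d)-d}$ is exactly the condition that makes the numerator $dc - s(c-1)S(d)$ positive, hence $\eta > 0$.

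In more detail, the steps in order are: (i) reduce to a single subspace $V$ as above and fix a defining linear form; (ii) after the $W$-reduction, parametrize each $x_i$ by a Piatetski-Shapiro prime $p_i$, so the subspace constraint becomes a (translation-invariant, by \eqref{Maincondi}) linear equation in the $p_i^d$; (iii) express the count as $\int_\bT \prod_i \widehat{\nu}(c_i' \alpha) \d\alpha$ for appropriate modified coefficients $c_i'$ reflecting $V$, then drop all but the variables genuinely constrained and bound the rest trivially by $\|\nu\|_1 = \sum \nu$; (iv) apply Hölder to the remaining integral and invoke Lemma \ref{psiupsilon} with $u$ chosen just above $S(d)$ (legitimate since $c < 1 + c_2(d)$ on the relevant range, and one checks $1 + \frac{d}{sS(d)-d}$ is compatible with that range, or rather that the lemma's hypotheses are met under the stated $c$-restriction); (v) collect the exponents of $N$ and verify the total is $s - 1 - \eta$. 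The main obstacle I expect is step (iii)–(iv): one must be careful that the proper subspace $V$ actually forces at least two of the power-coordinates into a nontrivial relation (otherwise $V$ would be the whole hyperplane or fail the diagonal condition), and that after extracting the trivially-bounded variables, at least $u > S(d)$ copies of $\widehat{\nu}$ remain to feed into the restriction estimate — this is where the precise count $s \ge S(d)+1$ is used and where a sloppy reduction could lose a variable and break the argument. A secondary technical point is tracking the $\|\nu\|_1$ versus $N$ bookkeeping through the $W$-reduction and the $\sigma(b)$, $\varphi(W)/W$ normalizing factors so that the final bound is genuinely in terms of $\|\nu\|_1^s N^{-1-\eta}$ and not off by a power of $W \asymp \sqrt{\log x}$ that would spoil the saving; since $\eta$ is a fixed positive constant while powers of $\log x$ are negligible against $N^{\eta}$, this is harmless, but it should be noted.
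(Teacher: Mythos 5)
The paper proves this lemma by citation to [Corollary 7.2]{Zhang-Zhang-2023} (following Chow's Section 7), so there is no detailed proof in the text to compare against; but your blind reconstruction has a genuine gap that I can identify from the structure of the bound.

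Your steps (iii)–(iv) describe writing the count as a single integral $\int_\bT \prod_i \widehat{\nu}(c_i'\alpha)\,d\alpha$, pulling out the factors with $c_i' = 0$ as $\|\nu\|_1$, and applying Hölder plus Lemma \ref{psiupsilon} to the rest. Track the exponents: if $t$ factors remain, Hölder with exponent $t$ gives $\int|\widehat{\nu}|^t \ll N^{t-1}$ (restriction estimate), and the $s-t$ trivial factors give $\|\nu\|_1^{s-t} \asymp N^{s-t}$. The total is $N^{s-1} \asymp \|\nu\|_1^s N^{-1}$ — there is no room for an extra $N^{-\eta}$. A single linear constraint cannot produce the power saving. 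The point of the hypothesis $\dim V \le s-2$ is that $V$ lies in the intersection of \emph{two} independent hyperplanes (equivalently: one further coordinate beyond the one absorbed by $\bc\cdot\bx=0$ is determined). To extract $\eta>0$ you must actually cash in this second constraint, by bounding the extra pinned coordinate with $\|\nu\|_\infty$ (not $\|\nu\|_1$), or by running the circle method over $\bT^2$, or by noting that two tied coordinates produce $\|\nu\|_2^2$ instead of $\|\nu\|_1^2$. The saving then comes from the sparsity of $\supp\nu$, via $\|\nu\|_\infty/\|\nu\|_1 \asymp N^{-1/(dc)+o(1)}$ and $\|\nu\|_2^2/\|\nu\|_1^2 \asymp N^{-1/(dc)+o(1)}$ (because $|\supp\nu| \ll x^{1/c}/L \ll N^{1/(dc)+o(1)}$), and this is exactly how a positive $\eta$ enters. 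Your write-up asserts in step (v) that "the total is $s-1-\eta$" without showing where the extra $-\eta$ arises; as set up, it does not.

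A secondary issue: you propose to invoke Lemma \ref{psiupsilon} with $u$ just above $S(d)$, but at the minimal case $s = S(d)+1$, after dropping (at least) one variable you are left with at most $S(d)$ Fourier factors, and Lemma \ref{psiupsilon} requires $u > S(d)\bigl(1 + \tfrac{2(c-1)}{1-\theta(d,c)}\bigr) > S(d)$, so it does not apply. At that edge one must instead use the underlying mean value bound \eqref{1-zz} directly (as in the inequality \eqref{suppo1} of the proof of Lemma \ref{falphaeta}), which gives $\int|\widehat{\nu}|^{S(d)} \ll N^{S(d)-1+\gamma} L^{O(1)}$ with $\gamma = \frac{S(d)(c-1)}{dc}$; it is this loss of $N^\gamma$, balanced against the $N^{-1/(dc)}$-type gain above, that produces an $\eta$ of the claimed form (and the condition $c < 1 + \frac{d}{sS(d)-d}$ is precisely what keeps the net exponent positive). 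Relatedly, Lemma \ref{k-trivial} is stated on the range $c \in (1, 1+\frac{d}{sS(d)-d})$, which for general $d,s$ is not contained in the range $c \in (1, 1+c_2(d))$ of Lemma \ref{psiupsilon}, so a proof of this lemma should not lean on Lemma \ref{psiupsilon} at all.

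Your reduction to a single proper subspace and your identification of the diagonal/properness constraints are correct, and your observation about the sign condition defining $\eta>0$ is right as far as it goes; but the analytic core — where the $\eta$-saving actually comes from — is missing.
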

\begin{proof}
When $d\ge 3$, one is referred to \cite[Corollary 7.2]{Zhang-Zhang-2023}; when $d=2$, the proof is similar.
\end{proof}

\begin{proof}[Proof of Theorem \ref{MainThm}]

We will use Proposition \ref{Browning-Prendiville}
 to prove Theorem \ref{MainThm}.  It is easy to see that 
 $
 c_2(d)\le c_1(d)\le \frac{391}{2426}.
 $
Let
 $$
 c(d,s)=\min\left\{c_2(d), \frac{d}{sS(d)-d}\right\}.
 $$
 One can check that  $c(d,s)$ can be  expressed as \eqref{c_0} where $\bar{s}(d)=S(d)+1$.  
Then for $c\in (1,1+c(d,s)),$ the majorant $\nu$ defined in Lemma \ref{DT} satisfies  Lemmas \ref{Fourierdecay}--\ref{k-trivial}. 

    Let  $\cA\subset \cP_x^c$  and  suppose that \eqref{equation} has only $K$-trivial solutions in $\cA.$
    Let $\cB$ be the lifting defined as in Lemma \ref{DT}.
    We will apply Proposition \ref{Browning-Prendiville} to the set $A=\cB.$
    We first show that  \eqref{MainEqn} has only $K$-trivial solutions in $\cB,$ which means that  if $\mathbf{n} \in \mathcal{B}^s$ and $\mathbf{c} \cdot \mathbf{n} = 0,$ then $\mathbf{n} \in K.$
    Now suppose $\mathbf{n}=(n_1,n_2, \dots, n_s) \in \mathcal{B}^s$ satisfies $\mathbf{c} \cdot \mathbf{n} = 0$. Then  \eqref{Maincondi} implies  $\mathbf{c} \cdot (W\mathbf{n}-b\mathbf1 ) = 0.$ By \eqref{cAdef}, for each $1 \le i \le s,$ there exists $p_i\in \cA$ such that $Wn_i - b=p_i^d.$ Thus
    $$
    c_1p_1^d+c_2p_2^d+ \dots +c_sp_s^d=\mathbf{c} \cdot (W\mathbf{n}-b\mathbf1 ) = 0.
    $$
    Since \eqref{equation} has only $K$-trivial solutions in $\cA,$ we have $W\mathbf{n}-b\mathbf1=(p_1^d, p_2^d,\dots,p_s^d)\in K.$
    Since $\mathbf1\in K$ and $K$ is invariant under translations and dilations,  we get $\mathbf{n} \in K$, which confirms that $\mathcal{B}$ has only $K$-trivial solutions to \eqref{MainEqn}.
    Note that $\mathcal{B} \subseteq \supp\left(\nu\right)\subset [N].$ Incorporating these facts into Proposition \ref{Browning-Prendiville}, we deduce that
    \[
    \sum_{n \in \mathcal{B}} \nu\left(n\right) \ll \frac N{ \min \left\{ \log \log \left(w^{\frac12-\eps}\right), \log N \right\}^{s-2-\eps}}
     \ll N\left(\log \log \log \log x\right)^{2-s+\eps}.
    \]
   This together with \eqref{DensityTransfer} yields
$$
    |\cA|^{c} \log^{c} x\ll x\left(\log \log \log \log x\right)^{\frac{2-s}{d}+\eps}.
$$
 \end{proof}

\bigskip

\section{Preliminaries for the proof of Lemma \ref{psiupsilon}}\label{Restriction}

\begin{lemma}\label{trig-estimate} Let $\psi(x)=x-\lfloor x\rfloor-\frac12$ and
 $H>1.$ Then there exists a trigonometric polynomial
$$
\psi^*(x)=\sum_{1 \leqslant|h| \leqslant H} a_h e(h x) \quad\left(a_h \ll|h|^{-1}\right)
$$
such that for any real number $x$,
$$
\left|\psi(x)-\psi^*(x)\right| \leqslant \sum_{|h|<H} b_h e(h x) \quad\left(b_h \ll H^{-1}\right).
$$
\end{lemma}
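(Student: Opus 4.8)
The plan is to deduce Lemma~\ref{trig-estimate} from the Beurling--Selberg extremal majorant and minorant of the sawtooth $\psi$, which is the classical route to the Vinogradov--Vaaler trigonometric approximation. First I would reduce to the case that $H$ is a positive integer: replacing $H$ by $\lceil H\rceil$ only shrinks the frequency range allowed for $\psi^{\ast}$ and enlarges the implied constants, so this is harmless.

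The single substantial input is the following. For each positive integer $H$ there exist trigonometric polynomials $\psi^{+}$ and $\psi^{-}$ of degree at most $H$ with $\psi^{-}(x)\le\psi(x)\le\psi^{+}(x)$ for all real $x$, which may be chosen so that $\psi^{-}(x)=-\psi^{+}(-x)$ and
\[
\widehat{\psi^{+}}(0)=\frac{1}{2(H+1)},\qquad
\widehat{\psi^{+}}(h)=\widehat{\psi}(h)+O\!\left(\frac1H\right)\quad(1\le|h|\le H),
\]
where $\widehat{\psi}(h)=-\tfrac{1}{2\pi i h}$ is the $h$-th Fourier coefficient of $\psi$. These are produced in the standard way by rescaling Beurling's entire majorant $B$ of $\operatorname{sgn}$ (entire of exponential type $2\pi$, with $B(x)\ge\operatorname{sgn}(x)$ and $\widehat{B-\operatorname{sgn}}$ supported in $[-1,1]$) by a factor comparable to $H$ and periodising over $\mathbb{Z}$, the minorant being obtained from the reflection $x\mapsto-x$; I would quote this from Vaaler's extremal-function theory rather than reconstruct it.

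Granting this, set $\psi^{\ast}(x)=\tfrac12\bigl(\psi^{+}(x)+\psi^{-}(x)\bigr)$. Since $\psi^{-}(x)=-\psi^{+}(-x)$ and $\psi^{+}$ is real, $\widehat{\psi^{-}}(h)=-\overline{\widehat{\psi^{+}}(h)}$, so $\psi^{\ast}$ has vanishing constant term and, for $1\le|h|\le H$, Fourier coefficient $a_h=i\operatorname{Im}\widehat{\psi^{+}}(h)=\widehat{\psi}(h)+O(1/H)$; thus $\psi^{\ast}(x)=\sum_{1\le|h|\le H}a_h e(hx)$ with $|a_h|\le\frac{1}{2\pi|h|}+O(1/H)\ll|h|^{-1}$, which is the first assertion. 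For the second, $\psi^{\ast}$ is the average of $\psi^{+}$ and $\psi^{-}$, hence lies between them, and so does $\psi$; therefore
\[
\bigl|\psi(x)-\psi^{\ast}(x)\bigr|\le\psi^{+}(x)-\psi^{-}(x)=\sum_{|h|\le H}b_h e(hx),\qquad b_h=\widehat{\psi^{+}}(h)-\widehat{\psi^{-}}(h),
\]
the right-hand side being a non-negative trigonometric polynomial (a majorant minus a minorant of the same function). Here $b_0=\frac{1}{H+1}\ll H^{-1}$, while for $1\le|h|\le H$ we have $b_h=2\operatorname{Re}\widehat{\psi^{+}}(h)=2\operatorname{Re}\bigl(\widehat{\psi}(h)+O(1/H)\bigr)=O(1/H)$ because $\widehat{\psi}(h)$ is purely imaginary; hence $b_h\ll H^{-1}$. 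Re-indexing (for instance running the construction with parameter $H-1$) adjusts the frequency range of the error sum to $|h|<H$ exactly as stated, and the lemma follows.

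The only step that is not routine bookkeeping is the construction of $\psi^{+},\psi^{-}$ with the displayed coefficient bounds --- above all, the fact that the error majorant $\psi^{+}-\psi^{-}$ has \emph{every} Fourier coefficient $\ll H^{-1}$, not merely $\ll|h|^{-1}$. This ``flatness'' is exactly what kills the Gibbs-type spike at the integers and what separates Lemma~\ref{trig-estimate} from the naive truncation $-\sum_{1\le|h|\le H}(2\pi i h)^{-1}e(hx)$ of the Fourier series of $\psi$; it is precisely the content of the Beurling--Selberg extremal problem for $\operatorname{sgn}$. A more hands-on alternative is to convolve $\psi$ with a Fej\'er kernel of degree $\asymp H$ and then repair the residual spike near $\mathbb{Z}$ with an explicit correction term, but producing the pointwise error as a single clean non-negative trigonometric polynomial is what the extremal-function machinery supplies for free, and I expect that construction to be the main obstacle; everything downstream is the computation above.
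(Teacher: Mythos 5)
Your proposal is correct and follows essentially the same route as the paper's cited source: the paper's proof is simply a pointer to the Appendix of Graham--Kolesnik, which is precisely Vaaler's trigonometric approximation built from the Beurling--Selberg extremal majorant/minorant of $\operatorname{sgn}$. Your reduction $\psi^{\ast}=\tfrac12(\psi^{+}+\psi^{-})$, the symmetry $\psi^{-}(x)=-\psi^{+}(-x)$ forcing $a_0=0$ and $a_h=i\operatorname{Im}\widehat{\psi^{+}}(h)\ll|h|^{-1}$, and the identification of $b_h=2\operatorname{Re}\widehat{\psi^{+}}(h)\ll H^{-1}$ (using that $\widehat\psi(h)$ is purely imaginary) match the standard derivation that the paper quotes.
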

\begin{proof}
See \cite[Appendix]{Graham-Kolesnik-1991}.
\end{proof}

\begin{lemma}\label{estimation-1}
 Let $N$ be a large parameter, $N \le N_{1}\le 2N$, and  $k \geqslant 3$ be an integer. Suppose that $f\in [N, N_{1}] \rightarrow \mathbb{R}$ has $k$-th  order continuous derivatives and satisfies 
 $$
 0<\lambda_k \leqslant |f^{(k)}(t)| \leqslant A \lambda_k,\quad {\rm for}\ t \in [N, N_{1}].
 $$ 
 Then one has
\begin{align*}
 \sum_{N<n \leqslant N_{1}} e(f(n))
\ll_{A, k, \varepsilon} N^{1+\varepsilon}\left(\lambda_k^{\frac1{k(k-1)}}+N^{-\frac1{ k(k-1)}}+N^{-\frac2{k(k-1)}} \lambda_k^{-\frac2{ k^2(k-1)}}\right) .
\end{align*}
\end{lemma}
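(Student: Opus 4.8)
The plan is to deduce this from the classical theory of exponential sums with slowly varying amplitude, specifically the $k$-th derivative test in the form of van der Corput's inequality iterated through the Weyl–van der Corput process, or equivalently to invoke a Vinogradov-type estimate. The cleanest route is to appeal to the $k$-th derivative test as stated, for instance, in Graham–Kolesnik or Titchmarsh: if $f$ has a continuous $k$-th derivative on $[N,N_1]$ with $\lambda_k \le |f^{(k)}(t)| \le A\lambda_k$, then
\[
\sum_{N<n\le N_1} e(f(n)) \ll_{A,k} N^{1+\eps}\bigl(\lambda_k^{1/(2^k-2)} + N^{-1/(2^k-2)} + \cdots\bigr),
\]
but the exponent $2^k-2$ is weaker than the $k(k-1)$ claimed here. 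To get $k(k-1)$ in the denominator one must instead use the Vinogradov–Korobov mean value bound (or, for the stated range, the sharper exponent from the Bombieri–Iwaniec / Huxley machinery is not needed — the exponent $k(k-1)$ is exactly what the Vinogradov method, in the form given by Heath-Brown's treatment of Weyl sums via efficient congruencing/decoupling, yields after optimizing).

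Concretely, first I would reduce to the situation where $f$ is essentially a polynomial: on the interval $[N,N_1]$ of length $\le N$, approximate $f$ by its Taylor polynomial $P$ of degree $k$ about the left endpoint, so that the error $f-P$ is controlled and, after a standard partial-summation / partitioning argument (splitting $[N,N_1]$ into $O(1)$ subintervals on which the lower-order derivatives are monotone, via Lemma \ref{trig-estimate}-type smoothing if necessary), the sum $\sum e(f(n))$ is comparable to $\sum e(P(n))$. The leading coefficient $\alpha_k$ of $P$ satisfies $\alpha_k \asymp \lambda_k$ up to constants depending on $A$ and $k$. Second, I would apply the Weyl-type bound for the polynomial exponential sum $\sum_{n\le N} e(\alpha_k n^k + \cdots)$: the relevant input is that for the top-degree coefficient in a major-arc/minor-arc dichotomy one obtains a saving governed by $\|\alpha_k\|$, and combining the contributions from all scales gives precisely the three terms $\lambda_k^{1/(k(k-1))}$, $N^{-1/(k(k-1))}$, and $N^{-2/(k(k-1))}\lambda_k^{-2/(k^2(k-1))}$ after balancing the parameter in van der Corput's $AB$-process (the last, most complicated term arises from the regime where $\lambda_k$ is so small that the "$B$" step must be pushed maximally, and the exponent $k^2(k-1)$ in its exponent of $\lambda_k$ is the outcome of the recursion on the derivative order).

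The main obstacle I anticipate is keeping the bookkeeping of the van der Corput $AB$-process honest: each $A$-step (differencing) lowers the derivative order by one and squares the sum, while each $B$-step (Poisson/Kusmin–Landau) inverts, and one must track exactly how $\lambda_k$, $N$, and the length of the differenced intervals transform so that, after $k-2$ differencing steps followed by one $B$-step, the optimal choice of the free differencing parameter produces the stated exponents rather than something lossy. A secondary technical point is that $f$ is only assumed $C^k$, not smooth, so one cannot differentiate further; this forces the induction to bottom out exactly at the $k$-th derivative, which is why all three error terms carry $k(k-1)$ (and not $k(k+1)$ or a higher $2^k$-type exponent) in the denominator. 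Once the process is set up with the correct parameter $Q$ (the number of differencing shifts), the three terms fall out by taking $Q$ to balance $\lambda_k Q^{k-1} N^{?}$ against $1/Q$ against $Q/N$, and the claimed bound follows; I would present this balancing explicitly but relegate the routine derivative estimates for $f-P$ to a short remark.
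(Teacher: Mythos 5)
Your opening diagnosis is correct: the classical van der Corput $k$-th derivative test gives denominators of order $2^{k}-2$, which is strictly weaker than $k(k-1)$ for $k\ge 3$, so the stated bound genuinely requires the Vinogradov mean value theorem. Your second paragraph also correctly identifies the structure of Heath-Brown's argument, namely Taylor-approximating $f$ by a degree-$k$ polynomial on short blocks and feeding that into a polynomial Weyl-sum estimate; this is exactly the route the paper takes, since its proof is a bare citation of \cite{Heath-Brown-2017}, Theorem 1.

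The gap is in your third paragraph. You propose to realize the bound through the van der Corput $A^{k-2}B$-process and to obtain the three terms by balancing the differencing parameter $Q$. That mechanism cannot produce $k(k-1)$ in the denominators: each $A$-step squares the sum and halves the exponential saving, so $k-2$ $A$-steps followed by a $B$-step is precisely the derivation of the classical $2^{k}$-type bound you correctly dismissed in paragraph one. No choice of $Q$ in that recursion repairs the exponent. Heath-Brown's proof uses no differencing at all. The actual steps are: split $[N,N_1]$ into blocks of length $M$; on each block, Taylor-expand $f$ to degree $k$ about the left endpoint, the error being controllable because $f^{(k)}$ is bounded and continuous (this constrains $M$ from above); apply the Vinogradov-based Weyl estimate, i.e.\ Lemma~\ref{estimation-2} with $j=k$, noting that the leading Taylor coefficient is $\asymp\lambda_k$ and hence has a rational approximation $a/q$ with $q\asymp\lambda_k^{-1}$; finally sum over the $\asymp N/M$ blocks and optimize $M$. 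The three terms in the conclusion fall out of that last optimization, not from a differencing recursion, and the $k^{2}(k-1)$ in the exponent of $\lambda_k$ in the third term records the interaction between the block length $M$ and the $1/(k(k-1))$ saving per block, not an induction on derivative order. Carried out as written, your plan would reprove only the weaker classical estimate.
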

\begin{proof}
See \cite[Theorem 1]{Heath-Brown-2017}.
\end{proof}

\begin{lemma}\label{estimation-2}
 Let $k \geqslant 3$ be an integer, and let $\alpha_1,\alpha_2, \ldots, \alpha_k \in$ $\mathbb{R}$. Suppose that there exists a natural number $j$ with $2 \leqslant j \leqslant k$ such that, for some $a \in \mathbb{Z}$ and $q \in \mathbb{N}$ with $(a, q)=1$, one has $\left|\alpha_j-\frac{a}{q}\right| \leqslant q^{-2}$. Then,
$$
\sum_{1 \leqslant n \leqslant N} e\left(\alpha_1 n+\alpha_2 n+\cdots+\alpha_k n^k\right) \ll N^{1+\varepsilon}\left(q^{-1}+N^{-1}+q N^{-j}\right)^{\frac1{k(k-1)}} .
$$
\end{lemma}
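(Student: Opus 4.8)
The plan is to prove the Weyl-type bound in Lemma \ref{estimation-2} by reducing it to the classical Vinogradov–Weyl estimate via the Vinogradov–van der Corput "Weyl differencing" philosophy, packaged through a Vaughan-type argument, exactly as in \cite[Theorem 5.2]{Wooley-2012} or \cite{Heath-Brown-2017}. First I would note that the linear and lower-degree terms $\alpha_1 n + \cdots + \alpha_{j-1}n^{j-1}$ together with $\alpha_{j+1}n^{j+1}+\cdots$ are harmless: by a standard device one may discard all monomials except the single term $\alpha_j n^j$ at the cost of nothing more than an $N^\varepsilon$ factor, since the auxiliary difference polynomials that arise after differencing have leading coefficient controlled solely by $\alpha_j$. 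Concretely, I would perform $k-1$ successive van der Corput differencing steps (or, more efficiently, the minimal number needed to isolate degree $j$), after which the inner exponential sum is governed by a polynomial whose top coefficient is an integer multiple of $j!\,\alpha_j$ times the spacing parameters.

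The key steps, in order, are: (1) apply Weyl differencing / the Vinogradov mean value philosophy to replace $\left|\sum_{n\le N} e(P(n))\right|$ by a suitable power average over shifts of a sum involving only $\alpha_j$-dependent phases; (2) invoke the rational approximation hypothesis $|\alpha_j - a/q|\le q^{-2}$ with $(a,q)=1$ to count the number of shift-tuples for which the relevant phase $\|\,(\text{integer})\cdot \alpha_j\,\|$ is small — this is the standard "Diophantine counting" lemma that yields a bound of shape $q^{-1} + N^{-1} + qN^{-j}$ (the three terms corresponding respectively to the generic major-arc contribution, the trivial bound when $N$ is small relative to $q$, and the contribution of multiples of $q$ among integers up to $N^{j}$); (3) take the $\frac{1}{k(k-1)}$-th root, which is exactly the exponent produced by $k-1$ differencing steps on a degree-$\le k$ polynomial (each step squares the sum and the normalization $2^{-(k-1)}$ with the extra saving combines to $\frac{1}{k(k-1)}$ after accounting for the $k$ in the number of variables). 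I would then collect the $N^\varepsilon$ losses from the differencing and from divisor-type estimates into a single $N^{1+\varepsilon}$ prefactor.

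The main obstacle, as usual in such arguments, is step (2): controlling the number of integer tuples $(h_1,\dots,h_{j-1})$ in a box for which a certain linear-in-$\alpha_j$ expression lies in a short interval mod $1$. The clean way to handle this is to reduce to counting solutions of $|m\,\alpha_j| \pmod 1$ small for $m$ running over a dilated box of integers of size $\ll N^{j-1}$ (or $N^j$ after the final sum over $n$), and then to apply the elementary estimate that for $\|m\alpha_j\|$ this count is $\ll (L/q + 1)(1 + qL^{-1})$-type for $m\le L$; plugging $L\asymp N^{j}$ and simplifying gives precisely the claimed $(q^{-1}+N^{-1}+qN^{-j})$. One must be slightly careful that the differencing is applied in the variable whose associated coefficient after $j-1$ steps genuinely depends on $\alpha_j$ and not on the other $\alpha_i$; this is where the hypothesis "$2\le j\le k$" is used, ensuring there is at least one differencing step to perform and that the degree-$j$ coefficient survives.

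I expect this to be entirely standard and the write-up to consist of citing \cite{Wooley-2012} or \cite{Heath-Brown-2017} for the differencing framework and supplying only the short Diophantine counting computation; indeed the statement is essentially \cite[Lemma 2.4]{Vaughan-1997}-style Weyl's inequality with the degree of the polynomial decoupled from the degree of the distinguished monomial, so the cleanest proof simply quotes the generalized Weyl inequality and specializes.
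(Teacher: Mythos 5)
The paper's ``proof'' of Lemma \ref{estimation-2} is simply a citation: it is \cite[Theorem 5]{Bourgain-2017}, and no argument is supplied. Your proposal instead attempts to sketch the underlying proof, which is fine in principle, but the sketch contains a substantive error in the mechanism that produces the exponent $\tfrac{1}{k(k-1)}$.

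You assert that ``each step squares the sum and the normalization $2^{-(k-1)}$ with the extra saving combines to $\tfrac{1}{k(k-1)}$.'' This is not correct. Iterated Weyl (van der Corput) differencing applied to a degree-$k$ polynomial gives an exponent of $2^{1-k}$, which is exponentially worse than $\tfrac{1}{k(k-1)}$ for large $k$. The exponent $\tfrac{1}{k(k-1)}$ in Bourgain's Theorem~5 is not a differencing artifact; it is a direct consequence of the \emph{main conjecture in Vinogradov's mean value theorem}, namely the sharp bound $J_{s,k}(N)\ll N^{2s-k(k+1)/2+\varepsilon}$ for $s\geqslant k(k+1)/2$, which was established by Bourgain--Demeter--Guth via $\ell^2$-decoupling (and by Wooley for $k=3$ and later cases via efficient congruencing). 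The structure of the actual argument is: raise the Weyl sum to the power $2s$, use the translation-invariance of the Vinogradov system to bound $|f(\boldsymbol\alpha)|^{2s}$ by $J_{s,k}(N)$ times a sum over the ``shift'' variables of an exponential whose phase is linear in the $\alpha_i$'s, and then perform the Diophantine count you describe, using only the hypothesis on $\alpha_j$. Without the full-strength mean value theorem in place of ``the Vinogradov mean value philosophy,'' your step (1) does not produce the claimed exponent.

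A second, smaller issue: the claim that one may ``discard all monomials except the single term $\alpha_j n^j$ at the cost of nothing more than an $N^\varepsilon$ factor'' is not accurate. The lower- and higher-degree terms cannot be removed from the phase; what is true is that the argument never needs to \emph{know} anything Diophantine about the other $\alpha_i$'s, because after the mean-value reduction the inner sum over shifts factors in such a way that only $\|\,m\alpha_j\|$ enters the counting. The full polynomial is retained throughout, and the Vinogradov system one counts with involves all degrees from $1$ to $k$; this is precisely why the translation-invariance trick works. So the hypothesis $2\leqslant j\leqslant k$ is used to ensure there is a degree-$j$ monomial to exploit, not to justify discarding the others. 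Your step (2), the Diophantine counting giving $q^{-1}+N^{-1}+qN^{-j}$, is essentially right and is the standard major-arc lattice-point count; that part of the sketch is sound.

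In short: the paper's route is to quote Bourgain's theorem, and if you want to sketch a proof rather than cite, the key input you must name is the (now proven) Vinogradov main conjecture, not repeated Weyl differencing.
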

\begin{proof}
See \cite[Theorem 5]{Bourgain-2017}.
\end{proof}

\begin{lemma}\label{x831c}
Let $c\in (1, 1+c_3(d))$ and $\theta(d,c)$ be defined as in {\rm Lemma \ref{psiupsilon}}. Then uniformly for $\theta\in\bT,$ we have
\begin{equation*}
\sum_{m\in\mathbb N^c_x}cm^{d-\frac{1}{c}}\cdot e\left(m^d\theta\right)=
\sum_{m\le x}m^{d-1}\cdot e\left(m^d\theta\right)+O(x^{d-\frac{1-\theta(d,c)}c+\varepsilon}),
\end{equation*}
where  $c_3(2)=\frac12, c_3(3)=\frac{1}{15},$ and
\begin{align*}
c_3(d)&=\begin{cases}
\frac{2h(d)}{1-h(d)},\quad &{\rm if}\ 4\le d\le 11, \\
\frac{2k(d)}{1-k(d)},\quad &{\rm if}\ 2 \mid d,\ d\ge12, \\
\frac{2l(d)}{1-l(d)}, \quad&{\rm if}\ 2\nmid d,\ d\ge 12,
\end{cases}\\
\end{align*}
where $h(d), k(d), l(d)$ are defined as in \eqref{hkl}.
\end{lemma}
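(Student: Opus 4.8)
The plan is to detect membership in $\mathbb N^c$ via the Piatetski--Shapiro sawtooth identity, reduce to ordinary sums, approximate the sawtooth function by the trigonometric polynomial of Lemma~\ref{trig-estimate}, and estimate the resulting exponential sums by the van der Corput and Weyl bounds of Lemmas~\ref{estimation-1} and~\ref{estimation-2}. Write $\gamma=1/c\in(\tfrac12,1)$, $g(m)=cm^{d-\gamma}e(m^d\theta)$ and $\psi(t)=t-\lfloor t\rfloor-\tfrac12$, so that the left-hand side of the lemma equals $\sum_{m\le x}\mathbf 1_{\mathbb N^c}(m)\,g(m)$. Since $1<c<2$, for every integer $m\ge1$ one has
\[
\mathbf 1_{\mathbb N^c}(m)=\lfloor -m^\gamma\rfloor-\lfloor -(m+1)^\gamma\rfloor=(m+1)^\gamma-m^\gamma+\psi(-(m+1)^\gamma)-\psi(-m^\gamma).
\]
The contribution of $(m+1)^\gamma-m^\gamma=\gamma m^{\gamma-1}+O(m^{\gamma-2})$ is, since $c\gamma=1$, the asserted main term $\sum_{m\le x}m^{d-1}e(m^d\theta)$ plus an error $O\bigl(\sum_{m\le x}m^{d-2}\bigr)=O(x^{d-1})$, which is admissible because $\gamma(1-\theta(d,c))<1$. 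It remains to bound the sawtooth error
\[
E=\sum_{m\le x}g(m)\bigl(\psi(-(m+1)^\gamma)-\psi(-m^\gamma)\bigr),
\]
and I will show $E\ll x^{d-(1-\theta(d,c))/c+\eps}=x^{d-\gamma+\gamma\theta(d,c)+\eps}$.

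I treat $\sum_{m\le x}g(m)\psi(-m^\gamma)$ and $\sum_{m\le x}g(m)\psi(-(m+1)^\gamma)$ separately; the two are handled identically, $m^\gamma$ and $(m+1)^\gamma$ having comparable derivatives of every order. For $T:=\sum_{m\le x}g(m)\psi(-m^\gamma)$ I apply Lemma~\ref{trig-estimate} with a parameter $H$ that will be taken to be a suitable power of $x$: this replaces $\psi(-m^\gamma)$ by $\psi^*(-m^\gamma)=\sum_{1\le|h|\le H}a_he(-hm^\gamma)$ with $a_h\ll|h|^{-1}$, at the price of an error dominated by $\sum_{|h|<H}b_he(-hm^\gamma)$ with $b_h\ll H^{-1}$. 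The error majorant contributes an $h=0$ term of size $O(H^{-1}x^{d-\gamma+1})$, while for $h\ne0$ the sums $\sum_{m\le x}cm^{d-\gamma}e(-hm^\gamma)$, after partial summation in the monotone amplitude $cm^{d-\gamma}$ (at cost $x^{d-\gamma}$) and a dyadic decomposition, are controlled by the second-derivative van der Corput bound for $\sum_{M<m\le N_1}e(-hm^\gamma)$; summing over $1\le|h|<H$ this gives $O\bigl(x^{d-\gamma}(H^{1/2}x^{\gamma/2}+H^{-1/2}x^{1-\gamma/2})\bigr)$. The principal term of $T$ is $\sum_{1\le|h|\le H}a_h\sum_{m\le x}cm^{d-\gamma}e(m^d\theta-hm^\gamma)$, which by the same partial summation and dyadic decomposition is
\[
\ll x^{d-\gamma}(\log H)\max_{1\le|h|\le H,\ M\le x}\Bigl|\sum_{M<m\le N_1}e\bigl(m^d\theta-hm^\gamma\bigr)\Bigr|,\qquad N_1\le 2M.
\]

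Everything thus comes down to a uniform exponential sum estimate of the shape
\[
\Bigl|\sum_{M<m\le N_1}e\bigl(m^d\theta-hm^\gamma\bigr)\Bigr|\ll M^{1-\rho+\eps}\quad\text{with}\quad\rho=1-\gamma\theta(d,c),
\]
valid for $1\le|h|\le H$ uniformly in $\theta\in\bT$ and for $M$ in the range where the trivial bound is insufficient --- this is the heart of the argument and the main obstacle. I will prove it by a case analysis on the Diophantine quality of $\theta$. When $\theta$ is tiny, or more generally when $m^d\theta$ does not oscillate much, cancellation is forced entirely by the smooth term $-hm^\gamma$, whose $k$-th derivative is $\asymp|h|M^{\gamma-k}$ for $k\ge2$ (the polynomial part dropping out when $k>d$), so that Lemma~\ref{estimation-1} applied with an optimally chosen order $k$ delivers the required saving. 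When $\theta$ admits a rational approximation $a/q$ of moderate denominator, one splits $m$ into residue classes modulo $q$: the leading term of $m^d\theta$ becomes a constant, and the sum over each class is handled by Lemma~\ref{estimation-2} (for $d=2,3$) or, for $d\ge4$, by the new Weyl-type bounds underlying $h(d),k(d),l(d)$ in \eqref{hkl} --- it is precisely these bounds that produce the exponent $\theta(d,c)$ in the stated form, the classical Weyl bound being too weak for large $d$. Finally I choose $H$ to be a power of $x$ lying (roughly) between $x^{1-\gamma\theta(d,c)}$ and $x^{\gamma(2\theta(d,c)-1)}$, which balances the $h=0$ term $H^{-1}x^{d-\gamma+1}$, the van der Corput error $x^{d-\gamma}(H^{1/2}x^{\gamma/2}+H^{-1/2}x^{1-\gamma/2})$ and the principal term $x^{d-\gamma+\gamma\theta(d,c)+\eps}$, all against $x^{d-(1-\theta(d,c))/c+\eps}$; the restriction $c<1+c_3(d)$ is exactly the condition guaranteeing that such an $H$ exists and that the van der Corput order needed for the key estimate is available. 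Running the identical argument for the companion sum with $(m+1)^\gamma$ then completes the proof.
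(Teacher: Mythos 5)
Your opening reduction — the sawtooth identity, separating the main term $\sum_{m\le x}m^{d-1}e(m^d\theta)$, and replacing $\psi$ by the trigonometric polynomial of Lemma~\ref{trig-estimate} — matches the paper in substance (the paper keeps the difference $\Delta\psi(m)=\psi(-(m+1)^{1/c})-\psi(-m^{1/c})$ together and extracts the factor $y^{1/c-1}$ from it, while you split the two sums; both routes are viable). The problem is in the core step, the uniform estimate for $\sum_{y<m\le y'}e\bigl(m^d\theta+hm^{1/c}\bigr)$.

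You propose a case analysis on the Diophantine quality of $\theta$, with a residue-class splitting modulo $q$ when $\theta$ has a good rational approximation, and you write that Lemma~\ref{estimation-2} is used ``for $d=2,3$'' and that for $d\ge4$ one invokes ``the new Weyl-type bounds underlying $h(d),k(d),l(d)$.'' None of this is what actually makes the argument work, and the residue-class device is not needed and does not obviously close the gap. The decisive observation — which you brush against when you note that ``the polynomial part drops out when $k>d$'' — is that $m^d\theta$ is a polynomial of degree $d$, so $f^{(d+1)}(m)=h\binom{1/c}{d+1}(d+1)!\,m^{1/c-d-1}$ is \emph{identically independent of $\theta$}. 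For $4\le d\le 11$ one therefore applies Lemma~\ref{estimation-1} with $k=d+1$ directly to $f(m)=m^d\theta+hm^{1/c}$, obtaining a bound uniform in $\theta$ with no case split whatsoever; this is where $h(d)$ comes from. For $d\ge12$ the paper does \emph{not} rationally approximate $\theta$: it shifts $m\mapsto m+n$, Taylor-expands $(m+n)^{1/c}$ to obtain a genuine polynomial $P_{d_0}(n)$ of degree $d_0\in\{\tfrac{3d}{2},\tfrac{3d-1}{2}\}$, notes that its $(d+1)$-th coefficient $a_{d+1}=hm^{1/c-d-1}\binom{1/c}{d+1}$ comes entirely from the $hm^{1/c}$ part (again $\theta$ contributes only to degrees $\le d$), rationally approximates that coefficient by $1/q$ with $q\asymp y^{d+1-1/c}|h|^{-1}$, and only then applies Lemma~\ref{estimation-2} with $j=d+1$; this is where $k(d),l(d)$ come from. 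The cases $d=2,3$ are handled by citation to earlier papers, not by Lemma~\ref{estimation-2}.

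So the gap is concrete: your sketch of the crucial exponential-sum estimate replaces a clean, $\theta$-uniform derivative/coefficient argument with an unnecessary Diophantine case analysis whose major-arc branch (residue classes mod $q$, Weyl-type bound per class) is not carried out and is not how the claimed exponents $\theta(d,c)$ arise. Without correctly setting up the $(d+1)$-th derivative observation (for $4\le d\le11$) and the shift-and-Taylor-expand reduction to a degree-$d_0$ polynomial with a good rational approximation of its $(d+1)$-th coefficient (for $d\ge12$), the proof does not reach the stated values of $c_3(d)$ and $\theta(d,c)$.
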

\begin{proof}
For  $d=2,~3$, one  is referred to \cite[Lemma 6.1]{Ren-Zhang-Zhang-2023} and \cite[Lemma 6.1]{Zhang-Zhang-2023}, respectively. 

For $4\le d\le 11$, the proof is analogous.  In fact, since $\lfloor-n^{\frac{1}{c}}\rfloor-\lfloor-\left(n+1\right)^{\frac{1}{c}}\rfloor=1$ or $0$ according as $n \in\N^c$ or not, and
$$
\lfloor-n^{\frac{1}{c}}\rfloor=-n^{\frac{1}{c}}-\psi\left(-n^{\frac{1}{c}}\right)-\frac12,
\quad (1+\frac{1}{n})^{\frac{1}{c}}=1+\frac{1}{cn}+O(\frac{1}{n^2}),
$$
we get
\begin{align*}
&\sum_{\substack{ m\in \mathbb{N}^c_x}}cm^{d-\frac{1}{c}}\cdot e\left( m^d\theta\right)\\
&=\sum_{m\le x}c m^{d-\frac{1}{c}}\cdot e\left( m^d\theta\right)\left(\lfloor-m^{\frac{1}{c}}\rfloor-\lfloor-\left(m+1\right)^{\frac{1}{c}}\rfloor\right)\\
&=\sum_{m\le x}m^{d-1}\cdot e\left(m^d\theta\right)+\sum_{m\le x}c m^{d-\frac{1}{c}}\cdot e\left( m^d\theta\right)\Delta \psi(m)+O\left(x^{d-1}\right),
\end{align*}
where
\begin{equation*}
\Delta \psi(x)=\psi\left(-\left(x+1\right)^{\frac{1}{c}}\right)-\psi\left(-x^{\frac{1}{c}}\right).
\end{equation*}
It is easy to check that for $c\in(1, 1+\frac{2h(d)}{1-h(d)})$, we have $0<\theta(d,c)<1$,  thus the above $O$-term is acceptable in Lemma \ref{x831c}.  So it remains to prove
$$
\sum_{m\le x}c m^{d-\frac{1}{c}}\cdot e\left( m^d\theta\right)\Delta \psi(m)\ll x^{d-\frac{1-\theta(d,c)}c+\varepsilon}.
$$
By dyadic subdivision and partial summation, it suffices to show 
\begin{equation}\label{main-estimate}
 \sum_{m \sim y} e\left( m^d\theta\right) \Delta \psi(m)\ll y^{\frac{\theta(d,c)}c+\varepsilon}.
\end{equation}
  Applying Lemma \ref{trig-estimate},  we have (see \cite[Section 4.6]{Graham-Kolesnik-1991})
\begin{equation*}
\sum_{m \sim y} e\left( m^d\theta\right) \Delta \psi(m) \ll A(y)+B(y),
\end{equation*}
where
$$
A(y)=H_y^{-1} \sum_{|h|<H_y}\left|\sum_{m \sim y} e\left(h m^\frac1c\right)\right|
$$
and
$$
B(y)=y^{\frac1c-1} \sum_{1 \leqslant|h| \leqslant H_y} \max _{y<y^{\prime} \leqslant 2 y}\left|\sum_{y<m \leqslant y^{\prime}} e\left( m^d\theta+h m^\frac1c\right)\right| .
$$

Using the exponent pair $(\frac12,\frac12)$ (see \cite[Chapter 3]{Graham-Kolesnik-1991}), we obtain the estimate
$$
\sum_{m \sim y} e\left(h m^\frac1c\right) \ll |h|^{\frac12} y^{\frac1{2c}}+|h|^{-1} y^{1-\frac1c} \quad(h \neq 0) .
$$
Set
\begin{align*}
H=H_y=y^{1-\frac1c+v},
\end{align*}
where $0<v<1$ is to  be chosen later. Then we have proved  that
\begin{equation}\label{A(y)}
A(y) \ll y H^{-1}+H^{\frac12} y^{\frac1{2c}}+H^{-1} y^{1-\frac1c} \log H \ll y^{\frac1c-v}+y^{\frac{1+v} 2} .
\end{equation}

To bound $B(y)$, we put
$
f(m)= m^d\theta+h m^\frac1c .
$
Then for $m \sim y$,
$$
\left|f^{(d+1)}(m)\right| \asymp|h| y^{\frac1c-d-1}=\lambda.
$$
Applying Lemma \ref{estimation-1} to the inner sum of $B(y),$ we obtain
\begin{align*}
B(y) & \ll y^{\frac1c-1} \sum_{ 1\leqslant|h| \leqslant H}y^{1+\varepsilon}\left((|h|y^{\frac1c-d-1})^{\frac{1}{d(d+1)}}+y^{\frac{-1}{d(d+1)}}+y^{\frac{-2}{d(d+1)}}(|h| y^{\frac1c-d-1})^{\frac{-2}{d(d+1)^{2}}}\right) \\
& \ll
y^{(1+v)(1+\frac{1}{d(d+1)})-\frac1d+\varepsilon}+y^{1+v-\frac{1}{d(d+1)}+\varepsilon}+y^{(1+v)(1-\frac{2}{d(d+1)^{2}})+\varepsilon}.
\end{align*}
Choosing 
$$
v=\frac{1}{2}(1+\frac1{c})\cdot\frac{d(d+1)^2}{d(d+1)^{2}-1}-1,
$$ 
we get 
\begin{align*}
A(y)+B(y)&\ll y^{(1+v)(1-\frac{2}{d(d+1)^{2}})+\varepsilon}= y^{\frac{c+1}{2c}(1-h(d))+\varepsilon}
= y^{\frac{\theta(d,c)}{c}+\varepsilon}.
\end{align*}
Note that $v>0$ implies 
$$
c<\frac{d(d+1)^2}{d(d+1)^2-2}=1+\frac{2h(d)}{1-h(d)}=1+c_3(d).
$$
This proves the  desired estimate \eqref{main-estimate} for $4\le d\le 11$.

Assume $d \geqslant 12.$ We will use Lemma \ref{estimation-2} to bound $B(y)$. Let  $y> 1$ and  put
$$
z=y\left(|h| y^\frac1c\right)^{-\frac1{d_0+1}},
$$
where $d_0 \geqslant d+1$ is an integer to be chosen later. For each positive integer $n$ with $n \leqslant z$, one has
$$
\sum_{y<m \leqslant y^{\prime}} e(f(m))=\sum_{y<m \leqslant y^{\prime}} e(f(m+n))+O(z) .
$$
Summing over $n \in[1, z]$, we yield
$$
\sum_{y<m \leqslant y^{\prime}} e(f(m)) \ll \frac{1}{z} \sum_{y<m \leqslant y^{\prime}}\left|\sum_{n \leqslant z} e(f(m+n))\right|+z .
$$
Let $R_j(t)=(1+t)^\frac1c-F_j(t)$, where $F_j(t)=\sum_{0 \leqslant i \leqslant j}\left(\begin{array}{l}c^{-1} \\ i\end{array}\right) t^i$ is the $j$th Taylor polynomial of $(1+t)^\frac1c$.  
Taking $t=\frac{n}{m}$, we get
$$
f(m+n)  =(m+n)^d\theta+h m^\frac1c\left(F_{d_0}(\frac{n}{m})+R_{d_0}(\frac{n}{m})\right)  =P_{d_0}(n)+h m^\frac1c R_{d_0}(\frac{n}{m}),
$$
where $P_{d_0}(t)$ is a polynomial of degree $d_0$ whose $(d+1)$-th coefficient is $a_{d+1}$, i.e.
\begin{equation*}
a_{d+1}=h m^{\frac1c-d-1}\left(\begin{array}{c}
c^{-1} \\
d+1
\end{array}\right) ,
\end{equation*}
where
$
|h|\le H=y^{1-\frac1c+v}.
$
Noting that $R_{d_0}'(t) \ll|t|^{d_0}$ uniformly for $|t| \leqslant \frac{z}{y}$, we derive by partial integration that
$$
\sum_{n \leqslant z} e(f(m+n)) \ll(1+\underbrace{|h| y^\frac1c(\frac{z}{y})^{d_0+1}}_{\leqslant 1}) \max _{z_1 \leqslant z}\left|\sum_{1 \leqslant n \leqslant z_1} e\left(P_{d_0}(n)\right)\right| .
$$
By conjugating the last sum above if necessary, one can assume that $a_{d+1}>0$. Note that $a_{d+1}$ has the rational approximation $\left|a_{d+1}-\frac1{q}\right| \leqslant q^{-2}$, where $q=\left\lfloor \frac1{a_{d+1}}\right\rfloor\ge1$ since $a_{d+1}<1$. Furthermore, for $m \sim y$, we have $q \asymp y^{d+1-\frac1c}|h|^{-1}.$ 
Thus, by  applying Lemma \ref{estimation-2}, we derive that  for  $d_0\ge d+1$
\begin{equation*}
\sum_{1 \leqslant n \leqslant z_1} e\left(P_{d_0}(n)\right) 
\ll z_1^{1+\varepsilon}\left(q^{-1}+z_1^{-1}+q z_1^{-(d+1)}\right)^{\frac1{d_0(d_0-1)}}
\ll z^{1+\varepsilon}\left(|h| y^\frac1c\right)^{-v_0},
\end{equation*}
 where
\begin{align*}
v_0=\frac{d_0-d}{d_0(d_0^2-1)}.
\end{align*}
This  gives
\begin{equation*}
\sum_{y<m \leqslant y^{\prime}} e(f(m)) \ll z+y^{1+\varepsilon}\left(|h| y^\frac1c\right)^{-v_0} \ll y^{1+\varepsilon}\left(|h| y^\frac1c\right)^{-v_0}.
\end{equation*}
Hence
\begin{equation*}
B(y)=y^{\frac1c-1} \sum_{1 \leqslant|h| \leqslant H} y^{1+\varepsilon}\left(|h| y^\frac1c\right)^{-v_0} \ll y^{(1+v)\left(1-v_0\right)+\varepsilon} .
\end{equation*}
Choosing
\begin{equation*}
d_0=d_0(d)= \begin{cases}\frac{3d}2, & {\rm  if }\ 2 \mid d, \\
\frac{3 d-1}2, & {\rm  if }\ 2 \nmid d
\end{cases}
\end{equation*}
and
$$
v=\frac{\frac1c-1+v_0}{2- v_0},
$$
then picking up \eqref{A(y)}, we get
\begin{equation*}
A(y)+B(y) \ll y^{\frac1c-v}+y^{\frac{1+v} 2}+y^{(1+v)\left(1-v_0\right)+\varepsilon}
\ll y^{(1+\frac1c)\frac{1-v_0}{2-v_{0}}+\varepsilon}= y^{\frac{\theta(d,c)}{c}+\varepsilon}.
\end{equation*}
Note that $v>0$ means  
$$
c<\frac{1}{1-v_0}=1+\frac{v_0}{1-v_0}=1+c_3(d).
$$
This proves \eqref{main-estimate} for $d\ge 12$.
\end{proof}

\section{ The proof of Lemma \ref{psiupsilon}}\label{Restriction}
In this section, we give the proof of Lemma \ref{psiupsilon}. 
The proof is similar to the proof of \cite[Proposition 5.1]{Zhang-Zhang-2023}. 
\begin{lemma}\label{falphaeta}
    Let $\psi:\,\Z\to\C$ satisfy  $|\psi|\le\tau$, where
    \begin{align*}
\tau\left(n\right) =\begin{cases}\frac{c}{\sigma(b)}m^{d-\frac{1}{c}},&{\rm if~}Wn-b=m^d{\rm~for~some}\ m\in\mathbb N^c_x,\\
0,&\rm{otherwise}.
\end{cases}
\end{align*}
Then for any  $v> S(d)(1+\frac{2(c-1)}{1-\theta(d,c)})$  and $c\in\left(1,1+c_2(d)\right)$, we have
    \begin{align*}
    \int_0^1\left|\widehat{\psi}\left(\alpha\right)\right|^vd\alpha\ll_v N^{v-1}L^v,
    \end{align*}
where $S(d),$ $\theta(d,c)$ and $c_2(d)$ are  defined as in \rm{Lemma \ref{psiupsilon}.}
\end{lemma}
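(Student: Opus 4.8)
The plan is to follow the scheme of \cite[Proposition 5.1]{Zhang-Zhang-2023}, the only genuinely new ingredients being Lemma~\ref{x831c} (which replaces a Piatetski--Shapiro exponential sum by a classical Weyl sum) and the sharp mean value estimate \eqref{1-zz}. \emph{First} I would rewrite $\widehat\psi$. Writing $\psi=\tau g$ with $\|g\|_\infty\le1$ and using that $\tau(n)\ne0$ forces $Wn-b=m^d$ for some $m\in\N^c_x$ with $m^d\equiv-b\pmod W$, one gets
\[
\widehat\psi(\alpha)=\frac{c}{\sig(b)}\,e\Bigl(\tfrac{\alpha b}{W}\Bigr)\sum_{\substack{m\in\N^c_x\\ m^d\equiv-b\,(W)}}g(m)\,m^{d-\frac1c}\,e\Bigl(\tfrac{\alpha}{W}m^d\Bigr).
\]
Detecting $m^d\equiv-b\pmod W$ by additive characters modulo $W$ turns this into $O(W)$ sums over $m\in\N^c_x$ of $g(m)m^{d-\frac1c}e\bigl(\tfrac\alpha W m^d+\tfrac hW m\bigr)$; since $W,\sig(b)\ll x^{o(1)}$ these costs are swallowed by $L^\eps$, so after a dyadic splitting $m\sim M\le x$ it suffices to bound the $v$-th moment of each resulting weighted sum. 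One also records the trivial bound $\|\tau\|_1\asymp N$, whence $\|\widehat\psi\|_\infty\ll N$.

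\emph{Second}, I would invoke Lemma~\ref{x831c} — or its variant carrying the extra linear phase $\tfrac hW m$, proved identically via Lemma~\ref{trig-estimate} and Lemmas~\ref{estimation-1}--\ref{estimation-2} — to replace each block $\sum_{m\sim M,\,m\in\N^c_x}cm^{d-\frac1c}e(\cdots)$ by the classical Weyl sum $\sum_{m\sim M}m^{d-1}e(\cdots)$ plus an error $\mathcal E(\alpha)$ of size $\ll x^{d-\frac{1-\theta(d,c)}c+\eps}$. The decisive point is that the classical main term carries the weight $m^{d-1}$ rather than $m^{d-\frac1c}$, so no Piatetski--Shapiro sparsity is lost; thus $\widehat\psi=M+O(\mathcal E)$ with $M$ a classical Weyl sum of the expected size. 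For $\mathcal E$ I would \emph{not} use the pointwise bound directly: expanding the underlying $\Delta\psi$ through the Vaaler polynomial of Lemma~\ref{trig-estimate} and feeding in \eqref{1-zz}, one shows that the $S(d)$-th moment of $\mathcal E$ (and every higher even moment) exceeds the target $N^{S(d)-1}$ only by a factor which is a small positive power of $x$ tending to $1$ as $c\to1^+$.

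\emph{Third}, for the classical part I would run the level-set argument of \cite{Bourgain-1989} (cf.\ \cite[Proposition 5.1]{Zhang-Zhang-2023}): decompose the range of $|\widehat\psi|$ into $O(L)$ dyadic pieces and, for each level $\lambda$, bound $\meas\{\alpha\in\bT:|\widehat\psi(\alpha)|>\lambda\}$ by combining the $S(d)$-th moment (from \eqref{1-zz}, as in the previous step) with the arithmetic structure of the level set — the fact that $|\widehat\psi|$ can be large only near rationals of bounded denominator, which follows from duality, the Fourier decay of $\widehat{1}_{\supp(\tau)}$, and the Weyl bounds of Lemmas~\ref{estimation-1}--\ref{estimation-2} applied after Lemma~\ref{x831c} — and then summing the dyadic contributions. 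Together with the second step this yields $\int_0^1|\widehat\psi(\alpha)|^vd\alpha\ll N^{v-1}L^v$ as soon as $v$ is large enough that the minor-arc gain $(N/\lambda)^{\rho}$, with $\rho\asymp(1-\theta(d,c))/c$, absorbs the error-term inflation from the second step; carrying out this balancing produces precisely the threshold $v>S(d)\bigl(1+\tfrac{2(c-1)}{1-\theta(d,c)}\bigr)$, and the range $c\in(1,1+c_2(d))$ is the intersection of the range where Lemma~\ref{x831c} is nontrivial (that is, $0<\theta(d,c)<1$) with the range where this balancing goes through.

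\emph{Main obstacle.} The technical heart is the second step: establishing the linear-phase/congruence refinement of Lemma~\ref{x831c}, and — more importantly — estimating the higher moments of the Piatetski--Shapiro error term $\mathcal E$ by exploiting the cancellation in \eqref{1-zz} rather than the trivial bound, then tracking how the resulting loss (a small power of $x$, tending to $1$ as $c\to1$, but not absorbable into $L^\eps$) propagates through Bourgain's level-set count so as to pin down exactly the exponent $S(d)\bigl(1+\tfrac{2(c-1)}{1-\theta(d,c)}\bigr)$ and the admissible interval for $c$. By comparison, the major-arc (main-term) contribution converges already for $v>d+1\le S(d)$ and is routine.
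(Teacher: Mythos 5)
There is a genuine gap in your second and third steps, and it's the crux of why this lemma requires Bourgain's machinery rather than a direct exponential-sum computation.

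In step~1 you correctly observe that $\widehat\psi(\alpha)$ is a sum of the shape
\[
\sum_{\substack{m\in\N^c_x\\ m^d\equiv-b\,(W)}}g(m)\,m^{d-\frac1c}\,e\bigl(\tfrac{\alpha}{W}m^d\bigr)
\]
with an \emph{arbitrary} bounded weight $g$ (and you can indeed strip the congruence by characters mod $W$). But in step~2 you then propose to apply Lemma~\ref{x831c} (or its linear-phase variant) to ``each block'' $\sum_{m\sim M,\,m\in\N^c_x}cm^{d-\frac1c}e(\cdots)$ — and that block no longer carries the weight $g$. Lemma~\ref{x831c} and the van-der-Corput/Weyl estimates behind it are for \emph{unweighted} sums; they give nothing against an adversarial $g$ with $\|g\|_\infty\le1$. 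So the claimed pointwise approximation $\widehat\psi=\text{(classical Weyl sum)}+O(\mathcal E)$ does not follow. Similarly, in step~3 you assert that ``$|\widehat\psi|$ can be large only near rationals of bounded denominator, which follows from ... the Fourier decay of $\widehat 1_{\supp(\tau)}$''. This is false for arbitrary $\psi$ majorized by $\tau$: taking $\psi(n)=\tau(n)e(-\beta n)$ for any $\beta\in\bT$ shifts the big values of $\widehat\psi$ to a neighbourhood of $\beta$, which need not be near any low-height rational. So no level-set structure of $\widehat\psi$ can be read off directly from the structure of $\tau$ or $\supp(\tau)$.

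What the paper actually does is crucially different on exactly this point, and it is the heart of Bourgain's restriction argument. It first peels off a trivial regime (very small $\delta$) using only the moment majorization $\int|\widehat\psi|^{S(d)}\le\int|\widehat\tau|^{S(d)}$ together with \eqref{1-zz}. In the nontrivial regime it picks $N^{-1}$-spaced points $\theta_1,\dots,\theta_R$ in the level set of $\widehat\psi$, writes $\psi(n)=a_n\tau(n)$ with $|a_n|\le1$, and applies Cauchy--Schwarz to the double sum $\sum_r b_r\sum_n a_n\tau(n)e(\theta_r n)$. This eliminates the arbitrary weights entirely, leaving a bound of the form $R^2\delta^2N^2L^2\ll NL\sum_{r,r'}|\widehat\tau(\theta_r-\theta_{r'})|$. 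Only \emph{after} this Cauchy--Schwarz (and a subsequent H\"older with exponent $\kappa=d+\eps$) is Lemma~\ref{x831c} invoked — applied to $\widehat\tau$, which is a concrete unweighted sum — to pass to $\widehat\mu$ plus an admissible error, followed by the major/minor-arc dichotomy for $\widehat\mu$ and the $G_2$-count as in Bourgain. Your sketch omits this Cauchy--Schwarz duality step, which is precisely what converts a statement about an arbitrary majorized $\psi$ into a statement about the fixed majorant $\tau$; without it, neither the application of Lemma~\ref{x831c} nor the claimed arithmetic structure of the level set is available, so the argument as written does not close.
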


\begin{proof}
Write $u_1=\frac{2S(d)(c-1)}{1-\theta(d,c)}$ and 
$$
\cR_\delta:=\{\theta\in \T:\,|\widehat{\psi}(\theta)|>\delta NL\},\quad \delta\in (0,1).
$$
Following the argument as in the proof of \cite[Lemma 6.3]{Browning-Prendiville-2017}, it suffices to show that
\begin{equation}\label{mes-ineq}
\meas(\cR_\delta)\ll_{\varepsilon_1}\frac{1}{\delta^{S(d)+u_1+\varepsilon_1}N},
\end{equation}
where $\varepsilon_1>0$ is arbitrary.

By the definition of $\cR_{\delta}$, we have
\begin{align}\label{suppo1}
(\delta NL)^{S(d)}\cdot\meas(\cR_\delta)
&\le\int_{\T}|\widehat{\psi}(\theta)|^{S(d)}d\theta\le\int_{\T}|\widehat{\tau}(\theta)|^{S(d)}d\theta\nonumber\\
&\ll (x^{d-\frac{1}{c}})^{S(d)} \sum_{\substack{m_1,\dots,m_{S(d)} \le x \\
m_1^d +\dots+m_{{S(d)}/{2}}^d = m_{{S(d)}/{2}+1}^d+\dots+ m_{S(d)}^d}} 1\nonumber\\
&\ll x^{(d-\frac{1}{c})S(d)+S(d)-d}L\ll N^{S(d)(1+\frac{1}{d}(1-\frac{1}{c}))-1}L^{S(d)},
\end{align}
where we have used \eqref{1-zz} in the last inequality.

Assume $\delta\le N^{\frac{\theta(d,c)-1}{2dc}+\varepsilon_{3}}$ where $\varepsilon_{3}=\frac{S(d)(c-1)\varepsilon_{1}}{2dcu_{1}(u_{1}+\varepsilon_{1})}$.
Then  \eqref{suppo1} implies
$$
\meas(\cR_\delta)\ll \frac{1}{\delta^{S(d)}N^{1-\frac{S(d)(c-1)}{dc}}}\le
\frac1{\delta^{S(d)+u_1+\varepsilon_1} N}.
$$

Now assume  
$$ 
N^{\frac{\theta(d,c)-1}{2dc}+\varepsilon_{3}}<\delta <1.
$$
Let $\theta_1, \theta_2, \dots,\theta_R\in \cR_\delta$ be $N^{-1}$-spaced such that $\meas(\cR_{\delta}) \ll \frac{R}{N}$. Since $v>S(d)\ge 2d$, to prove \eqref{mes-ineq}, 
it suffices to show that
\begin{align}\label{R-condition}
R\ll_{\varepsilon_1}\frac{1}{\delta^{2d+\varepsilon_1}}.
\end{align}
For $\theta_r\in \cR_\delta$, we have $|\widehat{\psi}(\theta_r)| > \delta NL$ for $r=1,2, \dots, R$. It follows that
$$
R^2\delta^2N^2L^2\le \bigg(\sum_{r=1}^R|\widehat{\psi}(\theta_r)|\bigg)^2.
$$
Since $|\psi|\le \tau$, we can write $\psi(n)=a_n\tau(n)$ with $a_n\in\bC$ and $|a_n|\le 1$. Further, for $1\le r\le R$,  we write
$
|\widehat{\psi}(\theta_r)|=b_r\widehat{\psi}(\theta_r),
$
where $|b_r|=1$. Then by the Cauchy-Schwarz inequality, we have
\begin{align*}
R^2\delta^2N^2L^2\le&\bigg(\sum_{r=1}^Rb_r\sum_{n\in\Z} a_n\tau(n)e(\theta_rn)\bigg)^2 \notag \\
\le&\bigg(\sum_{n\in\Z}|a_n|^2\tau(n)\bigg)\bigg(\sum_{n\in\Z}\tau(n)\bigg|\sum_{r=1}^Rb_r  e(\theta_rn)\bigg|^2\bigg) \notag\\
\le&\bigg(\sum_{n\in\Z}\tau(n)\bigg)\sum_{1\le r,r'\le R}b_r\overline{b}_{r'}\sum_{n\in\Z}\tau(n)  e\big((\theta_r-\theta_{r'})n\big) \notag \\
\ll&NL
\sum_{1\le r,r'\le R}\big|\widehat{\tau}(\theta_r-\theta_{r'})\big|.
\end{align*}
Using H\"older's inequality, for $\kappa=d+\frac{\varepsilon_1}{3}$, we have
$$
R^2\delta^{2\kappa}N^{\kappa}L^{\kappa}\ll
\sum_{1\le r,r'\le R}\big|\widehat{\tau}(\theta_r-\theta_{r'})\big|^{\kappa}.
$$
By Lemma \ref{x831c} and noting that $c_2(d)<c_3(d),$ by similar argument as in \cite[Equation (6.2)]{Zhang-Zhang-2023}, we have, for $1<c<1+c_2(d),$  
\begin{equation*}\label{two}
\widehat{\tau}(\alpha)
=\widehat{\mu}\left(\alpha\right)+O\left(x^{d-\frac{1-\theta(d,c)}c+\varepsilon_{2}}\right),
\end{equation*}
where $\varepsilon_{2}=d\varepsilon_{3}$ and
\begin{equation*}
\mu\left(n\right) =\begin{cases}\frac{m^{d-1}}{\sigma(b)},&\text{if }Wn-b=m^d\ \text{for some}\ m\in \mathbb [x],\\
0,&\text{otherwise}.
\end{cases}
\end{equation*}
Therefore
\begin{equation}\label{nuOterm}
R^2\delta^{2\kappa}N^{\kappa}L^{\kappa}\ll
\sum_{1\le r,r'\le R}\big|\widehat{\mu}(\theta_r-\theta_{r'})\big|^{\kappa}+O\left(R^2 x^{\kappa(d-\frac{1-\theta(d,c)}c+\varepsilon_{2})}\right).
\end{equation}
By \eqref{Ddef}, $x^d\asymp NW,$ hence the $O$-term in \eqref{nuOterm} is $o(R^2\delta^{2\kappa}N^{\kappa}L^{\kappa})$.
Write
\begin{equation} \label{minor2}
\fn = \{ \alpha \in \bT: |\hat \mu (\alpha)| \le x^{d-\frac{\rho(d)}2} \},
\end{equation}
where 
\begin{equation*} 
\rho(d) =\begin{cases}2^{1-d}, \ &{\rm if}\ 2\le d\le 8,\\
\frac{1}{4(d^2-3d+3)},\ & {\rm if}\ d\ge 9.
\end{cases}
\end{equation*}
Following the arguments in the proof of  \cite[ Lemma 5.4]{Chow-2017}, we can obtain the following `major arc estimate': if $\alpha \in \bT \setminus \fn$, then there exist relatively prime integers $q$ and $a$ such that $0 \le a \le q-1$ and
\begin{equation} \label{major2}
\hat \mu (\alpha) \ll NLq^{\varepsilon-\frac1{d}}(1+N | \alpha - \frac{a}q |)^{-\frac1d}.
\end{equation}
Let us put $\theta_{r,r'}=\theta_{r}-\theta_{r'}$. By \eqref{minor2}, for $\theta_{r,r'}\in\fn$, we have
\begin{align*}
\sum_{\substack{1\le r,r'\le R\\
\theta_{r,r'}\in\fn}}\big|\widehat{\mu}(\theta_{r,r'})\big|^{\kappa}
&\ll R^2x^{\kappa(d-\frac{\rho(d)}2)}\ll R^2(WN)^{\kappa(1-\frac{\rho(d)}{2d})}\vspace{-0.2cm}\\[-0.7cm]
&\ll R^2N^{-\frac{\kappa\rho(d)}{2d}}N^{\kappa}L^{\frac{\kappa}2} =o(R^2\delta^{2\kappa}N^{\kappa}L^{\kappa}),
\end{align*}
where we have used the fact $\frac{1-\theta(d,c)}c<\frac{\rho(d)}2$ for $c\in(1,1+c_2(d))$.
Hence \eqref{nuOterm} becomes
\begin{equation} \label{Bourg4}
\del^{2 \kappa} N^\kappa L^{\kappa} R^2 \ll \sum_{\substack{1 \le r,r' \le R\\ \theta_{r,r'} \notin \fn}} |\hat \mu (\theta_{r,r'})|^\kappa.
\end{equation}

Now let $Q \asymp \del^{-3d}$. When $q > Q$, by \eqref{major2}, the  right hand side of \eqref{Bourg4} is
$
O(R^2 N^\kappa L^\kappa Q^{\varepsilon-\frac{\kappa}2}),
$
which is negligible comparing to the left hand side of \eqref{Bourg4}.
\ Hence the effective range for $q$ in \eqref{Bourg4} is $q\le Q.$ By \eqref{major2} we get 
\begin{equation*}
\del^{2 \kappa} R^2 \ll \sum_{q\le Q} \sum_{\substack{a~ {\rm mod}~q\\
(a, q)=1}}\sum_{1\le r,r'\le R}q^{\kappa\varepsilon-\frac\kappa{d}}(1+N | \theta_{r,r'} - \frac{a}q |)^{-\frac\kappa d}.
\end{equation*}
Hence
\begin{equation*}
\del^{2 \kappa} R^2 \ll \sum_{1 \le r,r' \le R} G_2(\theta_{r,r'}),
\end{equation*}
where
$$
G_2(\alp) = \sum_{q \le Q} \: \sum_{a=0}^{q-1}
\frac{q^{\kappa\varepsilon- \frac{\kappa}d}}{(1+N|\sin(\alp-\frac{a}{q})|)^{\frac{\kappa}{d}}}.
$$
Following the argument leading to \cite[Equation (4.16)]{Bourgain-1989}, but with $N^{2}$ replaced by $N$,
we get the desired bound \eqref{R-condition} for $R$, and hence finish the proof of Lemma \ref{falphaeta}.
\end{proof}

\begin{proof}[Proof of  Lemma \ref{psiupsilon}.] The proof follows step by step  the  proof of \cite[Proposition 5.1]{Zhang-Zhang-2023}, but replacing Lemma 5.2 in \cite[Section 5]{Zhang-Zhang-2023} by Lemma \ref{falphaeta} above.
 \end{proof}

\medskip

$\textbf{Acknowledgements}$ 
\bigskip

Part of this research was conducted while the second author was at Shandong University. The second author would like to thank  Bingrong Huang, Yongxiao Lin and  Lilu Zhao  for their warm hospitality. \\

This work is supported by National Natural Science Foundation of China (Grant No. 11871307 and Grant No. 12031008) and National Key Research and Development Program of China (Grant No. 2021YFA1000700). The second author was supported by UTUGS funding, working in the Academy of Finland project no. 333707.

\end{document}